\newtheorem{thm}{Theorem}[section]
\newtheorem{prop}[thm]{Proposition}
\newtheorem{lem}[thm]{Lemma}
\theoremstyle{definition}
\newtheorem{rmk}[thm]{Remark}
\newcommand{\dashdownarrow}{\mathrel{\rotatebox[origin=t]{-270}{\reflectbox{$\dashrightarrow$}}}}
\newcommand{\downinclusion}{\mathrel{\rotatebox[origin=t]{-90}{\reflectbox{$\subset$}}}}
\newcommand{\vequal}{\mathrel{\rotatebox[origin=t]{90}{\reflectbox{$=$}}}}
\newcommand{\downelement}{\mathrel{\rotatebox[origin=t]{-90}{\reflectbox{$\in$}}}}
\numberwithin{equation}{thm}
\newcommand{\vni}{\vskip 4pt \noindent}
\newcommand{\PP}{\ensuremath{\mathbb{P}}}
\newcommand{\h}{\ensuremath{\mathcal}}
\newcommand{\xdashrightarrow}[2][]{\ext@arrow 0359\rightarrowfill@@{#1}{#2}}
\begin{document}

\title[On  the Hilbert scheme of  smooth curves in $\mathbb{P}^4$ of degree $g+1$]
{On  the Hilbert scheme of  smooth curves in $\mathbb{P}^4$ of degree $d = g+1$  and genus $g$ with negative Brill-Noether number}

\thanks{This paper has started and prepared for publication when the first named author was enjoying the hospitality and the stimulating atmosphere of the Max-Planck-Institut f\"ur Mathematik (Bonn).
Also  supported  in part by National Research Foundation of South Korea (Grant \# 2019R1I1A1A01058457). The authors wish to thank the referee who pointed out non-trivial and rather silly mistakes in the submitted version. He also made a kind suggestion how to fix the error so that the main result of the paper still holds.}

%\dedicatory{Dedicated to Professor Joseph Harris on the occasion of his 70th birthday}
 
\author[Changho Keem]{Changho Keem}
\address{
Department of Mathematics,
Seoul National University\\
Seoul 151-742,  
South Korea}

\email{ckeem1@gmail.com}

\thanks{}

\author[Yun-Hwan Kim]{Yun-Hwan Kim}
\address{Department of Mathematics,
Seoul National University\\
Seoul 151-742, 
South Korea}
\email{yunttang@snu.ac.kr}

\subjclass{Primary 14C05, Secondary 14H10}

\keywords{Hilbert scheme, algebraic curves, linear series}

\date{\today}
\begin{abstract}
We denote by $\mathcal{H}_{d,g,r}$ the Hilbert scheme of smooth curves, which is the union of components whose general point corresponds to a smooth irreducible and non-degenerate curve of degree $d$ and genus $g$ in $\PP^r$. 
In this article, we show that for low genus $g$ outside the Brill-Noether range, the Hilbert scheme $\mathcal{H}_{g+1,g,4}$ is non-empty whenever $g\ge 9$ and irreducible whose only component generically consists of linearly normal  curves unless $g=9$ or $g=12$. 
This complements the validity of the original assertion of Severi  regarding the irreducibility of $\mathcal{H}_{d,g,r}$ outside the Brill-Nother range  for $d=g+1$ and $r=4$.
\end{abstract}
\maketitle

\section{\quad An overview, preliminaries and basic set-up}

Given non-negative integers $d$, $g$ and $r\ge 3$, let $\mathcal{H}_{d,g,r}$ be the Hilbert scheme of smooth curves parametrizing smooth irreducible and non-degenerate curves of degree $d$ and genus $g$ in $\PP^r$.

It seems that the irreducibility of $\mathcal{H}_{d,g,r}$ was first announced by Severi in the papers \cite{Sev1} \& \cite{Sev}. Severi asserts with an incomplete proof that $\mathcal{H}_{d,g,r}$ is irreducible for $d\ge g+r$ and more generally,  $\mathcal{H}_{d,g,r}$ is irreducible in the Brill-Noether range $$\rho (d,g,r):=g-(r+1)(g-d+r)\ge 0.$$

\vskip 4pt
After Severi, the irreducibility of $\mathcal{H}_{d,g,r}$ has been studied by several authors. Ein proved irreducibility of $\mathcal{H}_{d,g,r}$ in the range $d\ge g+r$ for $r=3$ and $r=4$; cf.  \cite[Theorem 4]{E1} 
and \cite[Theorem 7]{E2}. 

\vskip 4pt
For families of curves in $\mathbb{P}^3$ of lower degree $d\le g+2$, the most updated result is that any non-empty $\mathcal{H}_{d,g,3}$ is irreducible for every $d\ge g$; cf. \cite[Theorem 1.5]{KK},  \cite[Proposition 2.1 and Proposition 3.2 ]{KKL}, \cite[Theorem 3.1]{I} and \cite{KKy1}. 
Note that even in the range $d\ge g$ and $r=3$, the Brill-Noether number  may well become negative, however there is no reducible $\h{H}_{d,g,3}$ in this range.  The reducible examples of $\mathcal{H}_{d,g,3}$ occur only when $d\le g-1$, e.g. $\h{H}_{8,9,3}$ or $\h{H}_{9,10,3}$ both of which have two components.  In every such known reducible example of $\h{H}_{d,g,3}$ when $d\le g-1$, the Brill-Noether numbers  is negative.

\vskip 4pt
For families of curves in $\PP^4$ of lower degree $d\le g+3$ beyond the range $d\ge g+4$, there has been some extensions of the result of Ein. Hristo Iliev proved the irreducibility of $\mathcal{H}_{d,g,4}$ for $d=g+3$, $g\ge 5$ and $d=g+2$, $g\ge 11$; cf. \cite{I}. Note that the genus restriction on the genus $g$ in these results is equivalent to the condition $\rho(g+3,g,4) > 0$ or $\rho(g+2, g,4)>0$. The non-negativity (or positivity) of the Brill-Noether number indeed assures the existence of a distinguished unique component of the Hilbert scheme dominating the moduli space $\mathcal{M}_g$ and sometimes this makes the problem rather easier to handle. However, outside the Brill-Noether range, i.e. in the range $\rho (d,g,r)<0$, the problem usually becomes more subtle and one needs somewhat thorough knowledge or {\it case by case} analysis of the classes of projective curves under consideration.

For a fixed value $d$ near to the genus $g$ and small $r$, the genus becomes rather small if the Brill-Noether number is negative. Indeed methods of the proof of the irreducibility of  $\mathcal{H}_{g+2,g,4}$ for low genus cases - which is an extension of the result of H. Iliev regarding the irreducibility of $\mathcal{H}_{g+2,g,4}$ - significantly differ from those in the Brill-Noether range; cf. \cite[Corollary 2.2]{KKy2}.

\vskip 4pt
On the other hand, there are several examples violating the original Severi's assertion due to many authors; cf. \cite[Proposition 9]{E2}, \cite{Keem}, or \cite{CKP}. Despite all these {\it denumerably many} counterexamples to the  Severi's assertion, it is worthwhile to stress that all the reducible counterexamples (in the Brill-Noether range) are those such that all the extra components consist of non linearly normal curves. In this regard, we denote by $\mathcal{H}^\mathcal{L}_{d,g,r}$ the union of those components of ${\mathcal{H}}_{d,g,r}$ whose general element is linearly normal. For a detailed explanation and sources in the literature regarding the
Hilbert scheme of linear normal curves $\mathcal{H}^\mathcal{L}_{d,g,r}$, the reader are advised to refer \cite{BFK} or \cite[Remark 1.1]{KK3} and references therein.

In this article, we show that $\mathcal{H}_{g+1,g,4}$ is non-empty for $g\ge 9$ and irreducible except for $g=9$ and  $g=12$ outside the the Brill-Noether range; i.e. $g\le 14$. 
\vskip 4pt

The following remarks would help the readers to get down to the main issues of the Hilbert scheme business of this kind.

\begin{rmk}\label{review}
(1) It has been shown in \cite{KK3} that any non-empty $\h{H}^\h{L}_{g+1,g,4}$ is irreducible for $g\neq 9$, inside or outside the Brill-Noether range. Note that a non-empty $\h{H}_{d,g,4}$ is known to be irreducible only in the range $d\ge g+2$ by those results due to Ein, H. Illiev and the authors which were already mentioned.
\vskip 4pt
\noindent
(2) However, at least to the knowledge of the authors,  the irreducibility of $\h{H}_{g+1,g,4}$ has not been fully settled yet. In \cite{KK3} the irreducibility of $\h{H}^\h{L}_{g+1,g,4}$ was shown by the irreducibility of the Severi variety and the fact that the family of complete linear systems on moving curves corresponding to $\h{H}^\h{L}_{g+1,g,4}$ has the expected dimension, which is not directly applicable  to other possible components consisting of non-linearly normal curves.

\vskip 4pt
\noindent
(3) In this paper the author would like to make an attempt for a settlement of the irreducibility of 
$\h{H}_{g+1,g,4}$ outside the Brill-Noether range, and hopefully the methods we employ in this article may shed light on  handling the cases inside the Brill-Noether range as well.

\end{rmk}

The organization of this paper is as follows. After we briefly recall several basic preliminaries in the remainder of this section, we start the next section with the two reducible examples of $\mathcal{H}_{g+1,g,4}$ for $g=9$ and $g=12$.

\vskip 4pt
 We then proceed to deal with the irreducibility of $\mathcal{H}^\mathcal{L}_{g+1,g,4}$ for some low genus $g$, e.g.  $g=10$ or $g=11$. 
\vskip 4pt
For notations and conventions, we usually follow those in \cite{ACGH} and \cite{ACGH2}; e.g. $\pi (d,r)$ is the maximal possible arithmetic genus of an irreducible and non-degenerate curve of degree $d$ in $\PP^r$. Throughout we work over the field of complex numbers.

\vskip 4pt
Before proceeding, we recall several related results which are rather well-known; cf. \cite{ACGH2}.
Let $\mathcal{M}_g$ be the moduli space of smooth curves of genus $g$. For any given isomorphism class $[C] \in \mathcal{M}_g$ corresponding to a smooth irreducible curve $C$, there exist a neighborhood $U\subset \mathcal{M}_g$ of the class $[C]$ and a smooth connected variety $\mathcal{M}$ which is a finite ramified covering $h:\mathcal{M} \to U$, as well as  varieties $\mathcal{C}$, $\mathcal{W}^r_d$ and $\mathcal{G}^r_d$ proper over $\mathcal{M}$ with the following properties:
\begin{enumerate}
\item[(1)] $\xi:\mathcal{C}\to\mathcal{M}$ is a universal curve, i.e. for every $p\in \mathcal{M}$, $\xi^{-1}(p)$ is a smooth curve of genus $g$ whose isomorphism class is $h(p)$,
\item[(2)] $\mathcal{W}^r_d$ parametrizes the pairs $(p,L)$ where $L$ is a line bundle of degree $d$ and $h^0(L) \ge r+1$ on $\xi^{-1}(p)$,
\item[(3)] $\mathcal{G}^r_d$ parametrizes the couples $(p, \mathcal{D})$, where $\mathcal{D}$ is possibly an incomplete linear series of degree $d$ and dimension $r$ on $\xi^{-1}(p)$ - which is usually denoted by $g^r_d$. 
\end{enumerate}

\vskip 4pt
Let $\widetilde{\mathcal{G}}$ ($\widetilde{\mathcal{G}}_\mathcal{L}$ resp.) be  the union of components of $\mathcal{G}^{r}_{d}$ whose general element $(p,\mathcal{D})$ of $\widetilde{\mathcal{G}}$ ($\widetilde{\mathcal{G}}_\mathcal{L}$ resp.) corresponds to a very ample (very ample and complete resp.) linear series $\mathcal{D}$ on the curve $C=\xi^{-1}(p)$. Note that an open subset of $\mathcal{H}_{d,g,r}$ consisting of points corresponding to smooth irreducible and non-degenerate curves is a $\mathbb{P}\textrm{GL}(r+1)$-bundle over an open subset of $\widetilde{\mathcal{G}}$. Hence the irreducibility of $\widetilde{\mathcal{G}}$ guarantees the irreducibility of $\mathcal{H}_{d,g,r}$. Likewise, the irreducibility of $\widetilde{\mathcal{G}}_\mathcal{L}$ ensures the irreducibility of 
 $\mathcal{H}_{d,g,r}^\mathcal{L}$.

\vskip 4pt
We also make a note of the following well-known facts regarding the schemes $\mathcal{G}^{r}_{d}$ and $\mathcal{W}^r_d$; cf. \cite[Proposition 2.7, 2.8]{AC2}, \cite[2.a]{H1}, \cite[Ch. 21, \S 3, 5, 6, 11, 12]{ACGH2} and \cite[Theorem 1]{EH}. Following classical terminology, a base-point-fee linear series $g^r_d$ which is not very ample ($r\ge 2$) on a smooth curve $C$ is called {\it birationally very ample} when the morphism 
$C \rightarrow \mathbb{P}^r$ induced by  the $g^r_d$ is generically one-to-one (or birational) onto its image;  cf. \cite[p. 570]{EH2}, 

\begin{prop}\label{facts}
For non-negative integers $d$, $g$ and $r$, let $\rho(d,g,r):=g-(r+1)(g-d+r)$ be the Brill-Noether number.
	\begin{enumerate}
	\item[\rm{(1)}] The dimension of any component of $\mathcal{G}^{r}_{d}$ is at least $3g-3+\rho(d,g,r)$ which is denoted by $\lambda(d,g,r)$. Moreover, if $\rho(d,g,r)\geq0$, there exists a unique component $\mathcal{G}_0$ of $\widetilde{\mathcal{G}}$ which dominates $\mathcal{M}$(or $\mathcal{M}_g$).

	\item[\rm{(2)}] Suppose $g>0$ and let $X$ be a component of $\mathcal{G}^{2}_{d}$ whose general element $(p,\mathcal{D})$ is such that $\mathcal{D}$ is a birationally very ample linear series on $\xi^{-1}(p)$. Then 
	\[\dim X=3g-3+\rho(d,g,2)=3d+g-9.\]
		\end{enumerate}
\end{prop}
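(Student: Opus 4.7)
My plan is to deduce both parts from classical Brill--Noether theory in families; indeed the proposition is essentially a compilation of well-known results in \cite{ACGH2}, \cite{AC2}, \cite{H1}, \cite{EH}, and the route I would take mirrors the references already cited. The overall strategy is to first establish fibrewise dimension bounds on a single curve and then spread them over the relative moduli stack $\mathcal{M}$.

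For part (1), the first step is to realize $\mathcal{W}^r_d$ fibrewise as a degeneracy locus: on a smooth curve $C$ of genus $g$, $W^r_d(C)\subset \mathrm{Pic}^d(C)$ is cut out as the $(r+1)$-st degeneracy locus of a morphism of vector bundles on the Jacobian, so the Kempf--Kleiman--Laksov bound forces every component to have dimension at least $\rho(d,g,r)$. Performing the same construction relatively over $\mathcal{M}$ shows every component of $\mathcal{W}^r_d$ has dimension at least $3g-3+\rho(d,g,r)$; the natural surjection $\mathcal{G}^r_d\to\mathcal{W}^r_d$, whose generic fibres are Grassmannians of the expected dimension, then transports the bound to $\mathcal{G}^r_d$. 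For the uniqueness of a dominating component when $\rho(d,g,r)\geq 0$, I would invoke Gieseker's theorem that $G^r_d(C)$ is irreducible on a general curve $C$, combined with the standard observation that a general $g^r_d$ on a general curve is very ample whenever $r\geq 3$ (general-position lemma). This produces a unique irreducible component $\mathcal{G}_0\subset \widetilde{\mathcal{G}}$ dominating $\mathcal{M}$.

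For part (2), the lower bound $\dim X\geq \lambda(d,g,2)=3d+g-9$ is already supplied by (1); the opposite inequality comes from plane models. A birationally very ample $(p,\mathcal{D})\in X$ determines, via the induced morphism $\xi^{-1}(p)\to \mathbb{P}^2$, a reduced irreducible plane curve of degree $d$ and geometric genus $g$, i.e. a point of the Severi variety $V_{d,g}\subset |\mathcal{O}_{\mathbb{P}^2}(d)|$. By Harris's theorem, $V_{d,g}$ is irreducible of dimension $3d+g-1$, and normalization inverts the construction up to projective equivalence. Modding out by the free generic $\mathrm{PGL}(3)$-action (of dimension $8$) exhibits $X$ as birational to $V_{d,g}/\mathrm{PGL}(3)$ near a general point, forcing $\dim X=(3d+g-1)-8=3d+g-9$.

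The principal technical obstacle is the uniqueness clause in part (1): the expected-dimension bound alone allows several components of $\widetilde{\mathcal{G}}$ to dominate $\mathcal{M}$, and excluding this rests on the deep Gieseker--Fulton--Lazarsfeld irreducibility theorem for $G^r_d$ on a general curve. Once that is granted, the remainder of the argument, including part (2), reduces to organizing classical facts (Severi variety irreducibility, degeneracy-locus dimension bounds) in the relative setting over $\mathcal{M}$ and tracking the fibres of the forgetful maps between $\mathcal{G}^r_d$, $\mathcal{W}^r_d$ and $\mathcal{M}$.
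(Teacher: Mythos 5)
The paper gives no proof of this proposition: it is stated as a compilation of standard facts with pointers to \cite{AC2}, \cite{H1}, \cite{ACGH2} and \cite{EH}, so the only question is whether your sketch would actually establish it. Your part (2) and the existence of a dominating component are fine, but there are two genuine gaps.

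First, your derivation of the lower bound in part (1) does not work. You bound components of $\mathcal{W}^r_d$ from below as degeneracy loci and then ``transport'' the bound to $\mathcal{G}^r_d$ through the fibration $\mathcal{G}^r_d\to\mathcal{W}^r_d$. This fails for a component of $\mathcal{G}^r_d$ whose general member is an \emph{incomplete} special series: such a component lies over $\mathcal{W}^{r'}_d$ with $r'>r$, the available lower bound on the base is only $3g-3+\rho(d,g,r')$, and adding the fibre dimension $(r+1)(r'-r)$ recovers $3g-3+\rho(d,g,r)$ only when $d\ge g+r'$. In the situation this paper actually uses the bound (Lemma \ref{easy}: $d=g+1$, $r=4$, general member an incomplete $g^4_{g+1}$ contained in a complete $g^5_{g+1}$), your argument yields $\lambda(g+1,g,5)+\dim\mathbb{G}(4,5)=(4g-27)+5=4g-22$, strictly weaker than the claimed $\lambda(g+1,g,4)=4g-18$, which is exactly the inequality the paper needs. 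The correct proof realizes $\mathcal{G}^r_d$ itself (not $\mathcal{W}^r_d$) as a determinantal locus inside a Grassmann bundle over the relative Picard scheme, where the expected-codimension count gives $3g-3+\rho(d,g,r)$ for every component irrespective of completeness; this is the construction in \cite{ACGH} (Ch.~IV, \S 3) and \cite{ACGH2} (Ch.~21).

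Second, for uniqueness of the dominating component you invoke irreducibility of $G^r_d(C)$ on a general curve. That theorem (Fulton--Lazarsfeld connectedness plus Gieseker smoothness) requires $\rho\ge 1$; when $\rho=0$, $G^r_d(C)$ on a general curve is a finite set of reduced points, usually more than one, hence reducible. Uniqueness of the component of $\widetilde{\mathcal{G}}$ dominating $\mathcal{M}$ in the case $\rho=0$ is instead the monodromy theorem of Eisenbud--Harris, \cite[Theorem 1]{EH} --- precisely one of the references the paper attaches to this proposition. Finally, in part (2) Harris's irreducibility of the Severi variety is more than you need: the statement that every component of the Severi variety has dimension $3d+g-1$, due to Arbarello--Cornalba \cite{AC2}, already gives the upper bound $\dim X\le 3d+g-9$, and the (corrected) lower bound from part (1) gives equality.
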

\begin{rmk}\label{principal}

(1)
In the Brill-Noether range, the unique component $\mathcal{G}_0$ of $\widetilde{\mathcal{G}}$ (and the corresponding component $\mathcal{H}_0$ of $\mathcal{H}_{d,g,r}$ as well) which dominates $\mathcal{M}$ or $\mathcal{M}_g$ is called the  ``principal component".  

\vskip 4pt
\noindent
(2) In the range $d\le g+r$ inside the Brill-Noether range, the principal component $\mathcal{G}_0$ which has the expected dimension is one of the components of $\widetilde{\mathcal{G}}_\mathcal{L}$ (cf. \cite[2.1 page 70]{H1}),   and therefore $\widetilde{\mathcal{G}}_\mathcal{L}$ or 
 $\mathcal{H}_{d,g,r}^\mathcal{L}$  and hence  $\mathcal{H}_{d,g,r}$ is non-empty. 
 If one can show that $\mathcal{G}_0$ is the only component of 
 $\widetilde{\mathcal{G}}_\mathcal{L}$ ( $\widetilde{\mathcal{G}}$ resp.), the irreducibility of 
 $\mathcal{H}_{d,g,r}^\mathcal{L}$ ( $\mathcal{H}_{d,g,r} $ resp.) would follow immediately. 

\vskip 4pt
\noindent
(3) However outside the Brill-Noether range, such a distinguished component 
does not exist and this is one of the reasons why the problem becomes rather subtle.
\end{rmk}

We will utilize the  following upper bound of the dimension of an irreducible component of $\mathcal{W}^r_d$, which was proved  and used effectively in \cite{I}. 
A base-point-free linear series $g^r_d$ on $C$  is called {\it compounded of an involution (compounded for short)} if the morphism induced by the linear series gives rise to a non-trivial covering map $C\rightarrow C'$ of degree $k\ge 2$. 

\begin{prop}[\rm{\cite[Proposition 2.1]{I}}]\label{wrdbd}
Let $d,g$ and $r\ge 2$ be positive integers such that  $d\le g+r-2$ and let $\mathcal{W}$ be an irreducible component of $\mathcal{W}^{r}_{d}$. For a general element $(p,L)\in \mathcal{W}$, let $b$ be the degree of the base locus of the line bundle $L=|D|$ on $C=\xi^{-1}(p)$. Assume further that for a general $(p,L)\in \mathcal{W}$ the curve $C=\xi^{-1}(p)$ is not hyperelliptic. If the moving part of $L=|D|$ is
	\begin{itemize}
	\item[\rm{(a)}] very ample and $r\ge3$, then 
	$\dim \mathcal{W}\le 3d+g+1-5r-2b$;
	\item[\rm{(b)}] birationally very ample, then 
	$\dim \mathcal{W}\le 3d+g-1-4r-2b$;
	\item[\rm{(c)}] compounded, then 
	$\dim \mathcal{W}\le 2g-1+d-2r$.
	\end{itemize}
\end{prop}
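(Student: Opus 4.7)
The plan is to stratify $\mathcal{W}$ according to the structure of the complete linear series $|L|$ and reduce the problem to dimension estimates afforded by Proposition \ref{facts}. For a general $(p, L) \in \mathcal{W}$, I decompose $|L| = B + |M|$ where $B$ is the base locus of degree $b$ and $|M|$ is the complete base-point-free moving part, of degree $d - b$ and projective dimension $r' := h^0(L) - 1 \ge r$. The forgetful morphism $\mathcal{W} \to \mathcal{W}^{r'}_{d-b}$ sending $(p, L) \mapsto (p, M)$ has fibers isomorphic to the $b$-th symmetric product of $C = \xi^{-1}(p)$, so
\[ \dim \mathcal{W} \le b + \dim \mathcal{W}', \]
where $\mathcal{W}'$ denotes the image component, whose general element $(p, M)$ satisfies the same (a)/(b)/(c) hypothesis on the moving part. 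The problem then reduces to bounding $\dim \mathcal{W}'$ in each of the three regimes and, after adding $b$ back, tracking how the bound degrades when $r' > r$.

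For cases (a) and (b), the morphism $\phi_M : C \to \mathbb{P}^{r'}$ induced by $|M|$ is birational onto its image, a non-degenerate curve of degree $d - b$ and geometric genus $g$. Projecting from a general $(r' - 2)$-plane avoiding the image yields a birationally very ample $g^2_{d - b}$ on $C$, to which Proposition \ref{facts}(2) applies: the family of such pairs $(C, V_0)$ has dimension $3(d - b) + g - 9$. An incidence argument on triples $(C, M, V_0)$, with $V_0 \subset H^0(M)$ a $3$-dimensional birationally very ample subspace, yields a projection onto the $(C, M)$-side with Grassmannian fibers $\mathbb{G}(3, r' + 1)$ and a projection onto the $(C, V_0)$-side (where $M$ is recovered from $V_0$) whose fibers parametrize extensions $V_0 \subset H^0(M)$. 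Balancing these two counts produces the bound in case (b). In case (a) the very ample hypothesis supplies an additional codimension-type gain, since projection from a general point cannot collapse two distinct points or a tangent direction; this can be encoded in a secant-variety dimension estimate, and gives the sharper bound $3d + g + 1 - 5r - 2b$.

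For case (c), the morphism $\phi_M$ factors as $C \xrightarrow{\pi} C' \hookrightarrow \mathbb{P}^{r'}$ with $\deg \pi = k \ge 2$ and $|M| = \pi^{*}|M'|$ for some birationally very ample $|M'|$ on $C'$ of degree $(d - b)/k$ and dimension $r'$. I would parametrize $(C, M)$ by independently varying (i) the target $C'$ of some genus $g'$ (contributing at most $3g' - 3$), (ii) the linear series $|M'|$ on $C'$ (bounded by the Brill-Noether dimension), and (iii) the cover $\pi$ with the prescribed numerical invariants, whose moduli is at most $2g - 2 - k(2g' - 2)$ by the Hurwitz formula. Summing these contributions and optimising over $k \ge 2$ and $0 \le g' \le g$, while using the non-hyperellipticity of $C$ to exclude degenerate configurations (for instance to rule out $k = 2$ with $g' = 0$), should yield $\dim \mathcal{W}' \le 2g - 1 + (d - b) - 2r'$, and hence the stated bound after adding $b$ and invoking $r' \ge r$.

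The hardest step will be case (c), because the optimization over the covering degree $k$ and the target genus $g'$ is genuinely combinatorial and requires carefully combining the Hurwitz count with Brill-Noether estimates on $C'$; secondarily, in every case, I will need to control the discrepancy $r' - r = h^0(L) - 1 - r$ so that the final inequality is expressed purely in terms of the original $r$, which is where the hypothesis $d \le g + r - 2$ and the non-hyperellipticity of $C$ enter most decisively.
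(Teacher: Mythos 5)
The paper does not prove this proposition at all: it is imported verbatim from Iliev \cite[Proposition 2.1]{I}, so the only meaningful comparison is with Iliev's argument. Your overall frame is the right one: strip off the base locus at a cost of at most $b$ in the fibre, then bound the component $\mathcal{W}'$ containing the moving parts according to whether the general member is very ample, birationally very ample, or compounded, using that all three target bounds are decreasing in $r'=h^0(L)-1\ge r$. Your case (c) is also the standard route (Riemann/Lange count of the covers, plus the Picard variety or $W^{r'}$ of the target, plus Clifford on the special series $M$ to absorb $b$ via $2r'\le d-b$), although the optimization over $(k,g')$ that you defer is where the actual work lies.

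The genuine gap is in cases (a) and (b). Your incidence correspondence uses a projection centre \emph{disjoint} from the curve, so the plane model keeps degree $d-b$ and the count reads $\dim\mathcal{W}'+3(r'-2)\le 3(d-b)+g-9$, i.e.
\[
\dim\mathcal{W}\ \le\ 3d+g-3-3r-2b ,
\]
which falls short of the asserted bound in (b) by exactly $r-2$ (they agree only when $r=2$). The missing idea is to project from $r'-2$ general points \emph{on} the curve: the plane model then has degree $(d-b)-(r'-2)$, the relevant components of $\mathcal{G}^2$ have dimension $3\bigl((d-b)-(r'-2)\bigr)+g-9$ by Proposition \ref{facts}(2), and the map $(M,\,x_1+\cdots+x_{r'-2})\mapsto N:=|M(-\textstyle\sum x_i)|$ has \emph{finite} fibres, because $h^0(N+D)=r'+1$ forces the divisor $D$ to lie inside the base locus of the residual series $|K_C-N|$. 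This is precisely where the hypothesis $d\le g+r-2$ enters — it guarantees $h^0(K_C-N)\ge 2$, so that this base locus is a well-defined finite divisor — and not, as you suggest at the end, to control $r'-r$, which is harmless by monotonicity. The corrected count gains $3(r'-2)$ from the degree drop while losing only $r'-2$ to the choice of points, yielding $3d+g-1-4r'-2b$. For (a), your appeal to a ``secant-variety codimension gain'' is not an argument: you would need to extract an additional $2(r-2)$ beyond your (b) count, and nothing in the sketch produces it; the standard route reduces, again by projecting from points of the curve, to the bound $\dim\mathcal{H}_{e,g,3}\le 3e+g+1$ for families of smooth space curves from \cite{H1}. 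As written, your argument establishes a true but strictly weaker inequality in (b) and does not establish (a).
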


\section{\quad Irreducibility of $\mathcal{H}_{g+1,g,4}$}

We first recall several relevant results from \cite[Theorem 2.1 and Theorem 2.2]{KK3}.

\begin{rmk}\label{linearnormal}
\begin{enumerate}
\item[\rm{(1)}] Every non-empty $\mathcal{H}_{g+1,g,4}^\mathcal{L}$ is irreducible unless $g=9$. 
\item[\rm{(2)}] $\mathcal{H}_{g+1,g,4}=\mathcal{H}_{g+1,g,4}^\mathcal{L}=\emptyset$ for $g\leq8$.		
		\item[\rm{(3)}] For $g=9$, $\mathcal{H}_{10,9,4}=\mathcal{H}_{10,9,4}^\mathcal{L}$ is reducible with two components of dimensions $42$ and $43$.
		\item[\rm{(4)}] For $g=10$,  $\mathcal{H}_{11,10,4}=\mathcal{H}_{11,10,4}^\mathcal{L}$ is irreducible of the expected dimension $46$.
		\item[\rm{(5)}] For $g=12$, $\mathcal{H}_{13,12,4}$ is reducible with two components of the same expected dimension $54$, whereas 
		$\mathcal{H}_{13,12,4}^\mathcal{L}$ is irreducible.
\item[\rm{(5)}] The non-emptiness $\mathcal{H}^\h{L}_{g+1,g,4}$ (hence $\mathcal{H}_{g+1,g,4}$ as well) and the existence of the extra components in the reducible cases above is rather clear from the proof in \cite[Proposition 2.2]{KK3}. 
	\end{enumerate}

\end{rmk}

We make a note of the following  lemma which will avoid unnecessary repetitions in the course of the proof of our result.

\begin{lem}\label{double}
The residual series $\mathcal{E}=|K_C-\h{D}|$ of a  possibly incomplete very ample (and special) linear series $\h{D}$ cannot induce a double covering $C\stackrel{\eta}{\rightarrow} E$ onto a curve of genus $h\ge 0$ so that the base-point-free part of $\h{E}$ is a pull back of a non-special linear series on the target curve $E$ via $\eta$.
	
\end{lem}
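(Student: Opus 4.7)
The plan is to derive a contradiction by proving that, under the hypothesized decomposition of $\mathcal{E}$, every divisor of $\mathcal{D}$ must be invariant under the covering involution $\sigma$ of $\eta$; this immediately forces $\mathcal{D}$ not to separate the two points of any unramified fibre of $\eta$, contradicting very ampleness.

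I would begin by fixing the canonical divisor $K_{0}=(\omega_{0})$ provided by the residual pairing associated with $\mathcal{E}=|K_{C}-\mathcal{D}|$, so that for each $D\in\mathcal{D}$ there is a unique $F_{D}\in\mathcal{F}$ with the divisor-level identity
\[
D+B+\eta^{*}F_{D}\;=\;K_{0},
\]
where $B$ is the base locus of $\mathcal{E}$. Since $\eta\circ\sigma=\eta$ one has $\sigma^{*}(\eta^{*}F_{D})=\eta^{*}F_{D}$, so applying $\sigma^{*}$ to this identity and subtracting gives
\[
\sigma^{*}D-D\;=\;(\sigma^{*}K_{0}-K_{0})+(B-\sigma^{*}B)\;=:\;\theta,
\]
a divisor on $C$ which does \emph{not} depend on $D\in\mathcal{D}$.

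The heart of the argument is then to show $\theta=0$. For each $x\in C$ the displayed identity reads $m_{\sigma(x)}(D)-m_{x}(D)=m_{x}(\theta)$ with the right-hand side independent of $D\in\mathcal{D}$. If $m_{x}(\theta)>0$ at some point, then $m_{\sigma(x)}(D)\ge 1$ for every $D\in\mathcal{D}$, which makes $\sigma(x)$ a base point of $\mathcal{D}$; symmetrically, $m_{x}(\theta)<0$ would make $x$ a base point. Both outcomes are ruled out by the base-point-freeness forced by the very ampleness of $\mathcal{D}$, so $\theta=0$ and $\sigma^{*}D=D$ as divisors for every $D\in\mathcal{D}$.

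To finish, I would pick any $p\in C$ outside the (finite) ramification locus of $\eta$, so $p':=\sigma(p)\ne p$. Since every $D\in\mathcal{D}$ is $\sigma$-invariant one has $m_{p}(D)=m_{p'}(D)$, whence $\mathcal{D}(-p)=\mathcal{D}(-p-p')$; both have projective dimension $r-1$ because $\mathcal{D}$ is base-point-free, whereas very ampleness demands $\dim\mathcal{D}(-p-p')=r-2$, the desired contradiction. The main technical hurdle I expect is justifying the divisorial identity $D+B+\eta^{*}F_{D}=K_{0}$ \emph{uniformly} in $D$: one must invoke the standard bijection $\mathcal{D}\leftrightarrow\mathcal{F}$ coming from the residual structure of $|K_{C}-\mathcal{D}|$, and it is precisely at this point that the non-specialness of $\mathcal{F}$ (equivalently $h^{1}(E,\mathcal{F})=0$) enters, ensuring a well-defined residual pairing carried by a single $\omega_{0}\in H^{0}(K_{C})$.
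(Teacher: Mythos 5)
Your argument has a fatal gap at its foundation: the ``divisor-level identity'' $D+B+\eta^{*}F_{D}=K_{0}$, asserted to hold for a \emph{single fixed} canonical divisor $K_{0}$ and \emph{every} $D\in\mathcal{D}$, is false. Residual series are residual only up to linear equivalence: for $D\in\mathcal{D}$ and $E'\in\mathcal{E}$ one has $D+E'\sim K_{C}$, and the actual canonical divisor $D+E'$ moves as $D$ moves. For a fixed effective $K_{0}$ of degree $2g-2$, the condition that $K_{0}-D-B$ be effective forces $D\le K_{0}-B$, which cuts out a proper closed (generically empty) subset of the positive-dimensional series $\mathcal{D}$; non-specialness of $\mathcal{F}$ on $E$ is irrelevant to this and cannot ``carry'' the pairing by a single $\omega_{0}$. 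Once this identity is gone, the divisor $\theta=\sigma^{*}D-D$ is not defined independently of $D$, the argument that $\theta=0$ has no content, and the conclusion that every divisor of $\mathcal{D}$ is $\sigma$-invariant (which would make $\mathcal{D}$ itself compounded with $\eta$ --- a much stronger statement than anything the hypotheses give you, since it is the residual series $\mathcal{E}$, not $\mathcal{D}$, that is assumed to be pulled back) does not follow. Working with linear equivalence instead only yields $\sigma^{*}\mathcal{O}_C(D)\cong\mathcal{O}_C(D+B-\sigma^{*}B)$, a statement about the line bundle that says nothing about individual divisors.

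The paper's proof goes through Riemann--Roch rather than through any invariance of divisors. Writing $|\mathcal{E}-\Delta|=\eta^{*}(\mathcal{F})$ with $\mathcal{F}=g^{s}_{f}$ complete, base-point-free and non-special on $E$, one picks any $t\in E$ and sets $u+v:=\eta^{*}(t)$; non-specialness gives $\dim|\mathcal{F}+t|=s+1$, and pulling back,
\[
s+1\le\dim|\eta^{*}(\mathcal{F})+u+v|=\dim|K_{C}-\mathcal{D}-\Delta+u+v|\le\dim|K_{C}-(\mathcal{D}-u-v)|,
\]
so $h^{0}(K_{C}-(\mathcal{D}-u-v))\ge s+2$ and Riemann--Roch yields $\dim|\mathcal{D}-u-v|\ge r-1$, contradicting very ampleness directly. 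Your final step (deducing a contradiction from $\dim\mathcal{D}(-p-p')=r-1$) is the same endgame, but the route you take to reach it does not work; you would need to replace the divisor-level bookkeeping by this cohomological count.
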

\begin{proof} Note that the complete $|\mathcal{D}|=g^r_d$ is very ample if $\h{D}$ is. Let $C\stackrel{\eta}{\rightarrow}E\subset\PP^s$ be the double covering induced by the base-point-free part of $\h{E}=|K_C-\h{D}|$ and let $h$ be the genus of the curve $E$. We may assume $s:=\dim\h{E}\ge 2$ and $h\ge 1$ since for $s=1$ or $h=0$, $C$ is hyperelliptic and a hyperelliptic curve does not have a special very ample linear series. 

\vni Let $\Delta$ be the base locus of $\h{E}$ and  $|\h{E}-\Delta|=\eta^*(\h{F})$ for some complete, non-special and base-point-free $\h{F}=g^s_f$ on $E$ where $s=g-d+r-1$. Since $\h{F}$ is non-special, $s=f-h$. 
Choose any $t\in E$,  set $u+v:=\eta^*(t)$ and we have
\begin{eqnarray*} s+1&\le&\dim\eta^*(|g^s_f+t|)\le\dim|\eta^*(\mathcal{F}+t)|=\dim|\eta^*(\mathcal{F})+u+v|\\&=& \dim|\mathcal{E}-\Delta+u+v| = \dim|K_C-\mathcal{D}-\Delta +u+v|\\
&\le&
\dim|K_C-(\mathcal{D}-u-v)|.
\end{eqnarray*}
By Riemann-Roch, it follows that $$\dim |\mathcal{D}-u-v|=d-2-g+h^0(C, K_C-(\h{D}-u-v)))\ge r-1$$ and hence  $|\mathcal{D}|$ is not very ample, a contradiction. 
\end{proof}

Since the irreducibility (or the reducibility) of $\h{H}_{g+1,g,4}$ were treated fully in 
\cite{KK3} for $g=9, 10,12$ as in Remark \ref{linearnormal}, we focus on the three remaining cases $g=11$, $g=13$ and $g=14$ which are also outside the Brill-Noether range.

\vni
We will use the following dimension estimate of a component $\h{W}\subset\h{W}^r_{g+1}$ corresponding to a possible extra component of $\h{H}_{g+1,g,4}$ other than the one corresponding to the 
irreducible $\h{H}^\h{L}_{g+1,g,4}$.
\begin{lem} \label{easy}Assuming that $\h{H}_{g+1,g,4}$ is not irreducible,  we let $\mathcal{H}$ be an extra component of $\mathcal{H}_{g+1,g,4}$ other than $\mathcal{H}^\mathcal{L}_{g+1,g,4}$. Let $\mathcal{G}\subset\mathcal{G}^r_d$ be the component corresponding to $\mathcal{H}$, i.e. $\mathcal{H}$ is a $\mathbb{P}GL(5)$-bundle over $\mathcal{G}$ and $\h{G}\neq\h{G}_\h{L}$.
Let $\mathcal{W}\subset \mathcal{W}^{r}_{g+1}$ be the component containing the image of the natural rational map 
${\mathcal{G}}\overset{\iota}{\dashrightarrow} \mathcal{W}^{r}_{g+1}$ with $\iota (\mathcal{D})=|\mathcal{D}|$.
We also let $\mathcal{W}^\vee\subset \mathcal{W}^{r-2}_{g-3}$ be the locus consisting  of the residual  series  of elements in $\mathcal{W}$, i.e. $$\mathcal{W}^\vee =\{(p, \omega_C\otimes L^{-1}): (p, L)\in\mathcal{W}\}.$$ Let $b$ be the degree of the base locus of a general element of $\h{W}^\vee$.
Then $r\ge 5$ and we have the following dimension estimate of the locus $\h{W}$. 

\begin{enumerate}
\item[\rm{(1)}] If the base-point-free  part of a general element of $\h{W}^\vee$ is very ample,  then $b=0$ and
$$\dim\h{W}=\dim\h{W}^\vee=4g-5r+2.$$

\item[\rm{(2)}] If the base-point-free  part of a general element of $\h{W}^\vee$ is birationally very ample, then 
$$4g-5r+2\le \dim\mathcal{W}=\dim\mathcal{W}^{\vee}\le4g-4r-2-2b.$$
In particular, if $r=5$ we have $b=0$.
\item[\rm{(3)}] If the base-point-free  part of a general element of $\h{W}^\vee$ is compounded, 
$$4g-5r+2\le\dim\h{W}=\dim\h{W}^\vee\le3g-2r.$$
In particular we have $g\le 3r-2$.
\end{enumerate}
\end{lem}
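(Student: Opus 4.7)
The plan is to exploit the duality $L\leftrightarrow K_C\otimes L^{-1}$ between $\mathcal{W}$ and its residual locus $\mathcal{W}^\vee\subset\mathcal{W}^{r-2}_{g-3}$, estimate $\dim\mathcal{W}$ from below using the Brill--Noether inequality applied to $\mathcal{G}$, and bound it from above by applying Iliev's Proposition~\ref{wrdbd} to $\mathcal{W}^\vee$ (where the degree $g-3$ and dimension $r-2$ lie in a favourable regime). To begin, since $\mathcal{H}\ne\mathcal{H}^\mathcal{L}_{g+1,g,4}$, the general $(p,\mathcal{D})\in\mathcal{G}$ corresponds to a non-linearly normal embedding $C\hookrightarrow\PP^4$, so the complete $|L|:=|\mathcal{D}|$ has projective dimension $r\ge 5$. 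Riemann--Roch gives $h^0(K_C\otimes L^{-1})=h^0(L)-(g+1)+g-1=r-1$ and $\deg(K_C\otimes L^{-1})=g-3$, so $\mathcal{W}^\vee\subset\mathcal{W}^{r-2}_{g-3}$ and $\dim\mathcal{W}=\dim\mathcal{W}^\vee$. Since $|L|$ inherits very ampleness from $\mathcal{D}$ and is visibly special ($h^1(L)=r-1>0$), the general $C$ is not hyperelliptic, and Proposition~\ref{wrdbd} becomes applicable.

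For the lower bound, the fibre of $\iota\colon\mathcal{G}\dashrightarrow\mathcal{W}$ over $L$ is an open subset of the Grassmannian of $4$-dimensional linear sub-series of the complete $|L|\cong\PP^r$, of dimension $5(r-4)$. Combined with $\dim\mathcal{G}\ge\lambda(g+1,g,4)=4g-18$ from Proposition~\ref{facts}(1), this yields
\[
\dim\mathcal{W}\;\ge\;\dim\mathcal{G}-5(r-4)\;\ge\;4g-5r+2.
\]
For the upper bound, apply Proposition~\ref{wrdbd} to $\mathcal{W}^\vee\subset\mathcal{W}^{r-2}_{g-3}$ (the hypothesis $g-3\le g+(r-2)-2$ is automatic) and substitute $(d,r)\mapsto(g-3,r-2)$ in cases (a), (b), (c) to obtain the respective upper bounds
\[
4g+2-5r-2b,\qquad 4g-4r-2-2b,\qquad 3g-2r
\]
for $\dim\mathcal{W}^\vee$. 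Comparing each with the uniform lower bound $4g-5r+2$ extracts the stated conclusions directly: in (1), $0\le -2b$ forces $b=0$ and sharp equality $\dim\mathcal{W}=4g-5r+2$; in (2), specialising to $r=5$ gives $-23\le -22-2b$, hence $b=0$; and in (3), $4g-5r+2\le 3g-2r$ simplifies to $g\le 3r-2$.

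The lemma is essentially a bookkeeping reconciliation of these two dimension estimates, so no substantial obstacle is anticipated. The one point worth flagging is the sharpness in case (1), where the lower and upper bounds coincide exactly and simultaneously force both $b=0$ and the equality $\dim\mathcal{W}=4g-5r+2$; this precise numerical coincidence is the technical leverage that will let the subsequent case-by-case analysis of $\mathcal{H}_{g+1,g,4}$ for $g\in\{11,13,14\}$ rule out the ``very ample residual'' alternative.
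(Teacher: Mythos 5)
Your argument is correct and follows essentially the same route as the paper: the lower bound $\dim\mathcal{W}\ge 4g-5r+2$ comes from $\dim\mathcal{G}\ge\lambda(g+1,g,4)=4g-18$ together with the $\mathbb{G}(4,r)$-fibres of $\iota$, and the upper bounds come from applying Proposition~\ref{wrdbd} with $(d,r)$ replaced by $(g-3,r-2)$ to the residual locus $\mathcal{W}^\vee$, exactly as in the paper's proof. Your explicit checks of the hypotheses of Proposition~\ref{wrdbd} (non-hyperellipticity and $d\le g+r-2$) are correct additions that the paper leaves implicit, but the substance of the argument is the same.
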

\begin{proof} By Remark \ref{linearnormal}(1), a general element $(p,\mathcal{D})\in\mathcal{G}$ is such that $\mathcal{D}$ is an incomplete $g^4_{g+1}$ on $C=\xi^{-1}(p)$ hence $r:=\dim |\mathcal{D}|\ge 5$.

\vni
(1) By Proposition \ref{facts}(1) and Proposition \ref{wrdbd}(a), we have 
\begin{eqnarray*}
\lambda (g+1,g,4)&=&4g-18\le\dim\mathcal{G}\le\dim\mathbb{G}(4,r)+\mathcal{W}\\&=&\dim\mathbb{G}(4,r)+\dim \mathcal{W}^{\vee}\\
&\le& 5(r-4)+3(g-3)+g+1-5(r-2)-2b\\
&=&4g-18-2b 
\end{eqnarray*}
and hence $b=0$ and $$\dim\h{W}=\dim\h{W}^\vee=4g-18-5(r-4)=4g-5r+2.$$

\vni
(2)  By Proposition \ref{facts}(1) and Proposition \ref{wrdbd}(b), we have 
 \begin{eqnarray*}
4g-18&\le&\dim\mathcal{G}\le\dim\mathbb{G}(4,r)+\mathcal{W}=\dim\mathbb{G}(4,r)+\dim \mathcal{W}^{\vee}\\
&\le& 5(r-4)+3(g-3)+g-1-4 (r-2)-2b\\
&=&4g-22+r-2b.
\end{eqnarray*}
hence 
\begin{equation*}
4g-5r+2\le \dim\mathcal{W}=\dim\mathcal{W}^{\vee}\le4g-4r-2-2b.
\end{equation*}

\vni
(3) By Proposition \ref{wrdbd}(c), we have 
\begin{eqnarray*}
4g-18&\le&\dim\mathcal{G}\le\dim\mathbb{G}(4,r)+\mathcal{W}=\dim\mathbb{G}(4,r)+\dim \mathcal{W}^{\vee}\\
&\le&5(r-4)+ 2g-1+(g-3)-2(r-2)\\
&=&3g+3r-20.
\end{eqnarray*}
implying  
$$4g-5r+2\le\dim\h{W}=\dim\h{W}^\vee\le3g-2r.$$
\end{proof}

\begin{rmk} The following lemma is rather technical but is useful in transferring the  original problem into lower degree cases. Indeed it is a slight variation of a claim which appeared in the course of the proof of a key lemma  in \cite[Lemma 2.3]{KKy1}. For the convenience of the readers, a similar proof is provided.
\end{rmk}

\begin{lem}\label{diagram} Assuming that $\h{H}_{g+1,g,4}$ is not irreducible,  we let $\mathcal{H}$ be an extra component of $\mathcal{H}_{g+1,g,4}$ other than $\mathcal{H}^\mathcal{L}_{g+1,g,4}$. Let $\mathcal{G}\subset\mathcal{G}^r_d$ be the component corresponding to $\mathcal{H}$. 
Let $\mathcal{W}\subset \mathcal{W}^{r}_{g+1}$ and $\mathcal{W}^\vee\subset \mathcal{W}^{r-2}_{g-3}$ be as in Lemma \ref{easy}. Assume further that a general element 
of $\h{W}^\vee$ is base-point-free and birationally very ample.
Then there is a locus $\h{Z}\subset \h{W}^{r-3}_{g-5}$ such that 
$$\dim\h{Z}=\dim\h{W}=\dim{W}^\vee.$$

\end{lem}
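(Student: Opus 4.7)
The plan is to construct an incidence correspondence that sends a general $(p, M) \in \mathcal{W}^\vee$ to a $g^{r-3}_{g-5}$ obtained by subtracting from $|M|$ a degree-$2$ divisor contributing to a singular point of the birational image of $|M|$; the closure of the image of the natural second projection will be the desired locus $\mathcal{Z}$.

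First, for a general $(p, M) \in \mathcal{W}^\vee$, $|M|$ is a base-point-free, birationally very ample $g^{r-2}_{g-3}$ by hypothesis. Because ``birationally very ample'' excludes ``very ample'' by convention, the induced morphism $\phi_M \colon C \to C_0 \subset \mathbb{P}^{r-2}$ is birational onto a singular image $C_0$. For each singular point $s \in C_0$ the preimage $\phi_M^{-1}(s)$ is an effective divisor on $C$ of degree at least $2$, and extracting any degree-$2$ subdivisor $D \in C^{(2)}$ gives $h^0(M(-D)) = h^0(M) - 1 = r - 2$, so $M(-D) \in \mathcal{W}^{r-3}_{g-5}$. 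Form the incidence variety
\[
\mathcal{I} = \bigl\{\, ((p, M), D) : (p, M) \in \mathcal{W}^\vee,\ D \in C^{(2)},\ h^0(M(-D)) = h^0(M) - 1 \,\bigr\},
\]
with projections $\pi_1 \colon \mathcal{I} \to \mathcal{W}^\vee$ and $\pi_2 \colon \mathcal{I} \to \mathcal{W}^{r-3}_{g-5}$, $((p, M), D) \mapsto (p, M(-D))$. Since $\operatorname{Sing}(C_0)$ is finite and non-empty with finite preimages in $C$, $\pi_1$ is dominant with finite generic fibres, hence $\dim \mathcal{I} = \dim \mathcal{W}^\vee$; setting $\mathcal{Z} := \overline{\pi_2(\mathcal{I})} \subset \mathcal{W}^{r-3}_{g-5}$ immediately gives $\dim \mathcal{Z} \le \dim \mathcal{W}^\vee$.

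For the reverse inequality, one shows that $\pi_2$ is also generically finite onto $\mathcal{Z}$. The fibre of $\pi_2$ over a general $(p, N) \in \mathcal{Z}$ consists of effective $D \in C^{(2)}$ with $h^0(N(D)) = h^0(N) + 1$; equivalently, the two points of $\operatorname{Supp} D$ are identified by the residual morphism $\phi_{K_C - N}$ of $|K_C - N|$. Writing $(p, L) \in \mathcal{W}$ for the element with $K_C \otimes L^{-1} = M = N(D)$, we have $K_C - N \sim L + D$. Since $|L|$ is very ample of degree $g+1$, the series $|L+D|$ of degree $g+3$ is at worst birationally very ample (its morphism factors through the embedding by $|L|$ in a well-understood way), so its image has only finitely many singular points. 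Consequently, only finitely many divisors $D$ can collapse, $\pi_2$ has finite generic fibres, and $\dim \mathcal{Z} = \dim \mathcal{W}^\vee$. The remaining equality $\dim \mathcal{W} = \dim \mathcal{W}^\vee$ is automatic, being a consequence of the Serre duality bijection $L \mapsto K_C \otimes L^{-1}$ between $\mathcal{W}$ and $\mathcal{W}^\vee$.

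The main obstacle is to verify rigorously that $|K_C - N| = |L + D|$ is birationally very ample (and in particular not compounded of an involution) for generic $N \in \mathcal{Z}$, as otherwise the fibres of $\pi_2$ could be positive-dimensional. The required control on $|L + D|$ is obtained from the very ampleness of $|L|$ together with the Riemann--Roch calculation $h^0(L+D) = r + 2$, and follows the pattern of the crucial step in the proof of \cite[Lemma 2.3]{KKy1}.
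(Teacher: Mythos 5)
Your proposal is correct and takes essentially the same route as the paper: your incidence variety $\mathcal{I}$ is the paper's $\Sigma\subset\mathcal{W}^{r-3}_{g-5}\underset{\mathcal{M}}{\times}\mathcal{W}_2$ with $\pi_1,\pi_2$ playing the roles of $q,\pi$, and both generic-finiteness arguments coincide (finiteness of $\pi_1$-fibres from the finitely many singular points of the birational image of $|M|$, and finiteness of $\pi_2$-fibres from the birational very ampleness of $|K_C-N|$, which contains the very ample $|L|$ as a subsystem).
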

\begin{proof}
We consider the following obvious diagram: 

\[
\begin{array}{ccc}
\mathcal{W}^{r-3}_{g-5}\underset{\mathcal{M}}{\times}\mathcal{W}_2 &\stackrel{q}\dashrightarrow
\mathcal{W}^{r-3}_{g-3}\\
\\
\dashdownarrow\vcenter{\rlap{$\scriptstyle{{\pi}}\,$}}
\\ 
\\
\mathcal{W}^{r-3}_{g-5}
\end{array}
\]
where $q(\mathcal{E}',\mathcal{O}_C(R+S))=\mathcal{E}'\otimes\mathcal{O}_C(R+S)$ and $\pi(\mathcal{E}',\mathcal{O}_C(R+S))=\mathcal{E}'$.  Since a general element $(p,\mathcal{E})\in\mathcal{W}^\vee\subset\mathcal{W}^{r-2}_{g-3}\subset\mathcal{W}^{r-3}_{g-3}$ is birationally very ample and base-point-free, $q^{-1}(\mathcal{E})\neq\emptyset$ for a general  $(p,\mathcal{E})\in\mathcal{W}^\vee$. Let $\Sigma$ be a component of $q^{-1}(\mathcal{W}^\vee)$ such that $q(\Sigma )=\mathcal{W}^\vee$. Since a general $(p,\mathcal{E})\in\mathcal{W}^\vee$ is assumed to be birationally very ample, we see that $\dim q^{-1}(\mathcal{E})=0$ and hence \[\dim\Sigma = \dim\mathcal{W}^\vee.\] We set 
$\mathcal{Z}:=\pi({\Sigma})\subset\mathcal{W}^{r-3}_{g-5}$  and consider the following induced diagram:
\[
\begin{array}{ccc}
\mathcal{W}^{r-3}_{g-5}\underset{\mathcal{M}}{\times}\mathcal{W}_2\supset\Sigma & \stackrel{q}\dashrightarrow &
\mathcal{W}^\vee\subset\mathcal{W}^{r-2}_{g-3}\subset\mathcal{W}^{r-3}_{g-3}\\
\\
\quad\quad\quad\quad\quad\quad\dashdownarrow\vcenter{\rlap{$\scriptstyle{{\pi}}\,$}} & 
\\ 
\\
\quad\quad\mathcal{W}^{r-3}_{g-5}\supset\mathcal{Z} &
\end{array}
\]
We now argue that $\dim \pi^{-1}(\mathcal{E}')=0$ for a general $(p,\mathcal{E}')\in\mathcal{Z}$ as follows.
\noindent
\noindent
We choose $(p,\mathcal{E})\in\mathcal{W}^\vee$ and fix $(p, \mathcal{E}')\in\mathcal{Z}$  such that $(\mathcal{E}', \mathcal{O}_C(R+S))\in q^{-1}(\mathcal{E})$ for some  $R, S\in C=\xi^{-1}(p)$, i.e. $\mathcal{E}\cong\mathcal{E}'\otimes\mathcal{O}_C(R+S)$. Recall that by our initial setting, $\omega_C\otimes\mathcal{E}^{-1}=|\mathcal{D}|\in\mathcal{W}\subset\mathcal{W}^r_{g+1}$ is a very ample line bundle for a general $\mathcal{E}\in\mathcal{W}^\vee\subset\mathcal{W}^{r-2}_{g-3}$. 
We also note that the very ample, base-point-free and complete linear system $|\mathcal{D}|=\omega_C\otimes \mathcal{E}^{-1}=\omega_C\otimes\mathcal{E}'^{-1}\otimes\mathcal{O}_C(-R-S)$ is a subsystem of 
$\omega_C\otimes\mathcal{E}'^{-1}$.  Hence $\omega_C\otimes\mathcal{E}'^{-1}$ is birationally very ample; otherwise the isomorphism induced by the very ample $\mathcal{D}$ on $C=\xi^{-1}(p)$ onto its image factors non-trivially through the morphism induced by $\omega_C\otimes\mathcal{E}'^{-1}$, which is an absurdity. Therefore by noting that $\omega_C\otimes\mathcal{E}'^{-1}=g^{r+1}_{g+3}$, 
there are only finitely many choices 
of  $\mathcal{O}_C(-\widetilde{R}-\widetilde{S})$'s such that 
$$\omega_C\otimes\mathcal{E}'^{-1}\otimes\mathcal{O}_C(-\widetilde{R}-\widetilde{S})=g^r_{g+1}\in \mathcal{W}\subset\mathcal{W}^r_{g+1},$$ i.e. $(\mathcal{E}', \mathcal{O}_C(\widetilde{R}+\widetilde{S}))\in\Sigma$ or equivalently $\mathcal{E}'\otimes\mathcal{O}_C(\widetilde{R}+\widetilde{S})\in\mathcal{W}^\vee$, which implies $\dim \pi^{-1}(\mathcal{E}')=0$. By semi-continuity, we have 
 $\dim \pi^{-1}(\mathcal{E}')=0$ for a general $(p,\mathcal{E}')\in\mathcal{Z}$  and hence
 \[
 \dim\mathcal{Z}=\dim\Sigma.
 \]
\end{proof}

The following theorem will be used in the proof of Theorem \ref{main} for the case $g=14$, which asserts that  $\h{H}_{11.14, 3}$ is reducible with exactly two components and each of them has the minimal possible dimension.

\begin{thm} \label{residual}The Hilbert scheme $\h{H}_{11.14, 3}$ is reducible with two components and both of them have the same expected dimension $44$.
\end{thm}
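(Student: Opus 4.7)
My proof plan proceeds in three stages.

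First, I reduce to the linearly normal case, i.e., I show $\mathcal{H}_{11,14,3}=\mathcal{H}^\mathcal{L}_{11,14,3}$. Let $C\subset\PP^3$ be smooth non-degenerate of degree $11$ and genus $14$ and set $L=\mathcal{O}_C(1)$. If $h^0(L)\geq 5$, the complete $|L|$ is a $g^{\geq 4}_{11}$ that cannot be base-point-free: by Castelnuovo's bound $\pi(11,4)=12<14$, $|L|$ is not birationally very ample in $\PP^{\geq 4}$, and a base-point-free compounded $|L|$ is impossible because $11$ is prime (any cover would have degree $11$, forcing a $g^s_1$ on the target with $h^0\leq 2<5$). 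Hence $|L|$ has base points, which propagate to the $4$-dimensional subspace $V\subset H^0(L)$ defining the embedding $C\subset\PP^3$, contradicting $\deg C=11$.

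Second, I classify the complete residual series $\mathcal{E}=|K_C-L|=g^5_{15}$ using Lemma \ref{double} and Proposition \ref{wrdbd}. Double-cover structures $\mathcal{E}=\eta^*|F|+B$ with non-special pullback (hyperelliptic with $h=0,f=5,b=5$; bielliptic with $h=1,f=6,b=3$; double cover of a genus-$2$ curve with $h=2,f=7,b=1$, the only solutions to $h^0(F)=6$ and $2f+b=15$) are forbidden by Lemma \ref{double}; special-pullback double covers are ruled out by Clifford combined with $2f+b=15$. Triple-cover structures force $\mathcal{E}=5g^1_3$ on a trigonal $C$ (only $h=0,f=5,b=0$ is compatible with $h^0(F)=6$), and higher-degree covers are numerically impossible. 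Non-compounded $\mathcal{E}$ is base-point-free birationally very ample, embedding $C$ in $\PP^5$ as a degree-$15$ curve of geometric genus $14$ (allowed since $\pi(15,5)=18\geq 14$). Base-pointed cases with base locus of degree $b\geq 2$ are excluded by $\pi(15-b,5)<14$.

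Third, these two residual types give exactly two components. \textbf{Component A} consists of smooth curves of bidegree $(3,8)$ on a smooth quadric $Q\subset\PP^3$: the Diophantine system $a+b=11,\,(a-1)(b-1)=14$ forces $\{a,b\}=\{3,8\}$, and under the embedding $L=\mathcal{O}_Q(1,1)|_C$ one checks that $|K_C-L|=5g^1_3$ exactly matches the compounded-residual case. Its dimension equals $\dim|\mathcal{O}_{\PP^3}(2)|+\dim|\mathcal{O}_Q(3,8)|=9+35=44$. \textbf{Component B} consists of curves whose residual $|K_C-L|$ is birationally very ample in $\PP^5$. By Proposition \ref{facts}(1), any such component of $\mathcal{G}^3_{11}$ has dimension at least $\lambda(11,14,3)=29$; by applying an analog of Lemma \ref{easy} to the residual $\mathcal{W}^5_{15}$ (using Proposition \ref{wrdbd}(a)-(b)) combined with Lemma \ref{double} to rule out excess contributions from degenerate residual sub-types, equality $\dim\mathcal{G}=29$ is obtained, giving $\dim\mathcal{H}=29+\dim\PP\mathrm{GL}(4)=29+15=44$.

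The main obstacle is verifying that Component B achieves dimension exactly $44$ and is irreducible. Since $\rho(11,14,3)=-10<0$ lies outside the Brill-Noether range, a priori a component of $\mathcal{G}^3_{11}$ could strictly exceed the expected dimension, and a tight upper bound $\dim\mathcal{W}^5_{15,\text{b.v.a.}}\le 29$ must be extracted by carefully adapting Proposition \ref{wrdbd} to this $(d,g,r)=(15,14,5)$ setting. Irreducibility of Component B is a secondary but essential concern, and will likely rely on the residual identification $\mathcal{G}^3_{11}\leftrightarrow\mathcal{G}^5_{15}$ together with a case-by-case exclusion of putative sub-components via Lemma \ref{double}.
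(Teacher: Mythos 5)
Your overall strategy --- classify the residual series $\mathcal{E}=|K_C-\mathcal{O}_C(1)|=g^5_{15}$ and match the residual types to components --- is genuinely different from the paper's, which instead sorts curves by the surfaces they lie on (quadric, cubic, neither) and identifies the second component via liaison: a curve lying on no quadric or cubic sits on a pencil of quartics and is directly linked to a quintic of genus $2$, so the irreducibility and the dimension $44$ of $\mathcal{I}_4$ are inherited from the irreducibility of $\mathcal{H}_{5,2,3}$ and a parameter count on the Grassmannian of pencils of quartics. Your Steps 1 and 2 (linear normality, and the elimination of all compounded residual types except $\mathcal{E}=5g^1_3$ on a trigonal curve, which is exactly the type $(3,8)$ locus on a quadric) are sound and consistent with the paper.

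However, Step 3 has a genuine gap at precisely the point you flag as ``the main obstacle,'' and the tools you propose cannot close it. For your Component B you need the upper bound $\dim\mathcal{W}\le\lambda(11,14,3)=29$, i.e.\ $\dim\mathcal{H}\le 44$. But Proposition \ref{wrdbd}(a) applied to a component $\mathcal{W}\subset\mathcal{W}^3_{11}$ of very ample series with $b=0$ gives only $\dim\mathcal{W}\le 3\cdot 11+14+1-15=33$, hence $\dim\mathcal{H}\le 33+15=48$; applying it instead to the residual $\mathcal{W}^5_{15}$ gives the even weaker $3\cdot 15+14+1-25=35$. There is no ``analog of Lemma \ref{easy}'' available here: Lemma \ref{easy} gains its leverage from the codimension of a $4$-plane in $\mathbb{P}^r$ with $r\ge 5$ (the term $\dim\mathbb{G}(4,r)$), which is absent when the hyperplane series is complete, as you established in Step 1. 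Indeed the paper uses Theorem \ref{residual} in the $g=14$ case of Theorem \ref{main} exactly to kill a putative $48$-dimensional family in $\mathcal{W}^3_{11}$, so any proof of Theorem \ref{residual} resting only on these Brill--Noether-type bounds would be circular as well as numerically insufficient. The same applies to the irreducibility of Component B, for which you give no argument: the paper obtains both the bound $44$ and irreducibility simultaneously from the liaison correspondence $\Sigma\to\mathcal{H}_{5,2,3}$ (with fibers open in $\mathbb{G}(1,15)$) together with the cohomology vanishings $h^1(\mathbb{P}^3,\mathcal{I}_C(4))=h^1(\mathbb{P}^3,\mathcal{I}_D(4))=0$, plus a separate elimination of curves on cubic surfaces (Brevik for normal non-cones, Gruson--Peskine for cones, and an explicit $43$-dimensional count on smooth cubics). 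Some substitute for this geometric input is indispensable.
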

\begin{proof} Let $C$ be a smooth curve of genus $g=14$ and $d=11$ in $\PP^3$. By Riemann-Roch on $C$, the dimension of $H^0(\mathbb{P}^3,\mathcal{I}_C(4))$ is  at least 
$$h^0(\PP^3, \h{O}(4))-h^0(C,\h{O}(4))=35-(44-14+1)=4,$$ 
and hence $C$ lies on  quartic surfaces, which may well be reducible.

\vni
(1) Let $C$ lie on a (unique) quadric $Q$. By solving $d=11=a+b$ and $g=14=(a-1)(b-1)$, we see that $C$ is a curve of type $(a,b)=(3,8)$ on a smooth quadric. Let $\mathcal{I}_2$ be the family of such curves arising in this way, which is clearly irreducible. By counting the number of parameters of the family of  such pairs $(C,Q)$ we readily have  $$\dim\mathcal{I}_2=h^0(\mathbb{P}^3,\mathcal{O}(2))-1+h^0(\PP^1\times\PP^1, \mathcal{O}(a,b))-1
=44=4\cdot d.$$ It is not clear at this point if $\mathcal{I}_2$ is open and dense in a component of the Hilbert scheme $\h{H}_{11.14, 3}$, which will be answered in the affirmative at the end of the proof. 

\vni
(2) Next we assume that $C$ lie on a (unique) smooth cubic surface $S$, which is isomorphic to $\mathbb{P}^2$ blown up at $6$ points and embedded by anti-canonical linear system $|-K_S|$ in $\mathbb{P}^3$.  We denote by $\mathcal{O}_S(a;b_1,\cdots ,b_6)$ the invertible sheaf on $S$ associated to the Cartier divisor 
$al-\sum^6_{i=1}b_ie_i$ on $S$, where $l$ is the pull-back of a line in $\mathbb{P}^2$ and $e_i ~(1\le i\le 6)$ are the six exceptional divisors. Setting $C\sim al-\sum^6_{i=1}b_ie_i$, we have $$\deg C =3a-\sum b_i=11, C^2=a^2-\sum b_i^2=2g-2-K_S\cdot C=37.$$
By Schwartz's inequality,  one has $$(\sum b_i)^2\le 6(\sum b_i^2)$$ and substituting 
$\sum b_i=3a-11$, $\sum b_i^2=a^2-37$ we obtain $$3a^2-66a+343\le 0,$$ implying 
$9\le a\le 13$. After elementary but rather tedious numerical calculation, we arrive at the following possibilities for 
the $7$-tuple $(a; b_1, \cdots , b_7)$;

(i) $(9; 3,3,3,3,2,2)$ (ii) $(10; 4,4,3,3,3,2)$ (iii) $(11; 5,4,4,3,3,3)$ 

(iv) $(12; 5,5,4,4,4,3)$
(v) $(13; 5,5,5,5,4,4),$

\vni and by a simple numerical check, in all five cases the curve $C$ is linearly equivalent on $S$ to $D+3H$ where $D$ is a disjoint union of two of the $27$ lines on the cubic.

\vni
For $L=\mathcal{O}_S(a;b_1,\cdots ,b_6)$, by Riemann-Roch on $S$ we have
\begin{eqnarray*}
h^0(S,L)&=&h^1(S,L)-h^2(S,L)+\chi(S)+\frac{1}{2}(L^2-L\cdot\omega_S)\\
&=&h^1(S,L)-h^2(S,L)+1+\frac{1}{2}(C^2+\deg C)\\
&=&h^1(S,L)-h^2(S,L)+1+\frac{1}{2}(37+11)
\end{eqnarray*}

\vni
By Serre duality  we have, $$h^1(S,L)=h^1(S, \omega_S\otimes L^{-1})=h^1(S,\mathcal{O}_S(-(a+3)l+\sum(b_i+1)e_i).$$
Since $E:=(a+3)l-\sum(b_i+1)e_i$ is (very) ample we have $$h^1(S,L)=h^1(S,\mathcal{O}_S(-(a+3)l+\sum(b_i+1)e_i))=0$$ by Kodaira's vanishing theorem. 
We further note the divisor $-E$ is not linearly equivalent to an effective divisor; if it were, one would have $-E\cdot l=-(a+3)\ge 0$ whereas $l^2=1\ge 0$, a contradiction. Hence it follows that  $$h^2(S,L)=h^0(S,\omega_S\otimes L^{-1})=h^0(S,\mathcal{O}_S(-E)=0$$ and we obtain $$h^0(S,L)=25.$$ Therefore the sublocus $\mathcal{I}_3$ of $\mathcal{H}_{11,14,3}$ consisting of  curves lying on a smooth cubic has dimension strictly less than the minimal possible dimension of a component of the Hilbert scheme $\mathcal{H}_{11,14,3}$; 
$$\dim\h{I}_3=\dim H^0(\mathbb{P}^3, \mathcal{O}(3))-1+\dim |\mathcal{O}_S(a;b_1,\cdots ,b_6)|=43<4\cdot 11,$$ hence $\mathcal{I}_3$ is not dense in a component of  $\mathcal{H}_{11,14,3}$. Indeed $\mathcal{I}_3$ is in the boundary of the component $\overline{\mathcal{I}_4}$ (the closure of $\mathcal{I}_4$), which we are going to describe later.  On the other hand, 
%assuming that the irreducible locus $\mathcal{I}_2$ is dense in a component of $\mathcal{H}_{11,14,3}$, 
we see easily that the locus $\mathcal{I}_3$ is not in the closure  $\overline{\mathcal{I}_2}$ by semicontinuity; note that a general member of the irreducible locus $\overline{\mathcal{I}_2}$ is trigonal whereas a general member $C$ in a component of the sublocus $\mathcal{I}_3$ has a base-point-free pencil of degree $6$ cut out on $C$ by the linear system $|2l-\sum^4_{i=1}b_ie_i|$. However such $C$ has gonality $k$ strictly greater than $3$ by Casteluovo-Severi inequality; if 
$k\le 3$, then $g=14\le (k-1)(6-1)\le 10$, an absurdity.

\vni
(3) It is possible that there might exist  components of $\mathcal{H}_{11,14,3}$ whose general element lies only on a singular cubic surface. However, one may argue that no such component exists as follows. 
Note that every singular cubic surface $S\subset \PP^3$ is one of the following three types.
\vskip 4pt
(i) $S$ is a normal cubic surface with some double points only.

(ii) $S$ is a normal cubic cone.

(iii) $S$ is not normal, which may possibly be a cone. 

\vni
For the case (i), let $S$ be a normal cubic surface which is not a cone. By a work due to John Brevik \cite[Theorem 5.24]{Brevik}, every curve on $S$ is a specialization of curves on a smooth cubic surface. Therefore we are done for the case (i). 

\vni
For the case (ii), we let $C$ be a smooth curve of degree $d$ and genus $g$ on a normal cubic cone $S$. Recall that 

(a) $g=1+d(d-3)/6-2/3$ if $C$ passes through the vertex of $S$ 

(b) $g=1+d(d-3)/6$, otherwise

\noindent
which can be found in \cite[Proposition 2.12]{Gruson} as an application of C. Segre formula.
However $(d,g)=(11,14)$ satisfies neither of the above. 

\vni
(iii) Let $C$ be a smooth curve of degree $d$ and genus $g$ on a non-normal cubic surface $S$. Recall that if $S$ is a cone, then $S$ is a cone over a singular plane cubic, in which case $S$ is a projection of a cone $S'$ over a twisted cubic in a hyperplane in  $\PP^4$ from a point not on $S'$. Furthermore,  the minimal desingularization $\tilde S$ of $S'$ is isomorphic to the ruled surface 
$$\mathbb{F}_3=\PP (\mathcal{O}_{\PP^1}\otimes \mathcal{O}_{\PP^1}(3)),$$ 
which is the blow-up of the cone $S'$ at the vertex.
If $S$ is not a cone, then $S$ is a projection of a rational normal scroll $$S''\cong\tilde S\cong\mathbb{F}_1=\PP (\mathcal{O}_{\PP^1}(1)\otimes \mathcal{O}_{\PP^1}(2))\subset \PP^4$$  from a point not on $S''$. 
In both cases, we have $\text{Pic}~ \tilde S=\mathbb{Z}h\otimes \mathbb{Z}f \cong\mathbb{Z}^{\otimes 2}$, where $f$ is the class of a fiber of $\tilde S \rightarrow \PP^1$ and $h=\pi^*(\mathcal{O}_S(1))$ with $\tilde S\stackrel{\pi}{ \rightarrow} S$. Note that $h^2=3, f^2=0$, $h\cdot f=1$ and $K_{\tilde S}\equiv -2h+f$. Denoting by $\tilde C\subset\tilde S$ the strict transformation of the curve $C\subset S$, we let $k:=(\tilde C \cdot f)_{\tilde S}$ be the intersection number of $\tilde C$ and $f$ on $\tilde S$. We have $\tilde C \equiv kh+(d-3k)f=kh+(11-3k)f$.
By adjuntion formula, it follows that
$$g=14=\frac{(2\cdot 11-3k-2)(k-1)}{2},$$ which does have an integer solution and we are done with the case (iii).
%\vskip 24pt

\vni
(3) Finally we assume that $C$ does not lie on a quadric or a cubic. Since every smooth curve $C$ of genus $g=14$ of degree $d=11$ in $\PP^3$ lies on at least $$h^0(\PP^3, \h{O}(4))-h^0(C,\h{O}(4))=35-(44-14+1)=4$$ independent quartics, we see in this case that $C$ is 
is residual to a curve $D\subset\PP^3$ of degree $e=5$ and genus $h=2$ in the complete intersection of two (irreducible) quartics
by the well-known formula relating degrees and genus of directly linked curves in $\PP^3$;
\begin{equation*}
2(g-h)=(s+t-4)(d-e).
\end{equation*}

\vni
Consider the locus
$$\Sigma\subset\mathbb{G}(1,\PP(H^0(\PP^3,\h{O}(4))))=\mathbb{G}(1,34)$$
of pencils of quartic surfaces whose base locus consists of a curve $C$ of degree $d=11$ and genus $g=14$ and a quintic $D$ of genus $h=2$ where $C$ and $D$ are directly linked via a complete intersection of quartics, together with the two obvious maps 
\[
\begin{array}{ccc}
\hskip -48pt\mathbb{G}(1,34)\supset\Sigma&\stackrel{\pi_C}\dashrightarrow
\quad\mathcal{I}_4\subset \hskip 3pt\h{H}_{11,14,3}\\
\\
\dashdownarrow\vcenter{\rlap{$\scriptstyle{{\pi_D}}\,$}}
\\ 
\\
\h{H}_{5,2,3},
\end{array}
\]
where $\mathcal{I}_4$ is the image of $\Sigma$ under $\pi_C$.
A quintic  $D\subset\PP^3$ of genus $h=2$ lies on at least $$h^0(\PP^3,\h{O}(4))-h^0(D,\h{O}(4))=35-19=16$$ independent quartics. 
Note that $C\in\mathcal{I}_4\subset\hskip 4pt\h{H}_{11.14.3}$ is directly linked to $D\in\h{H}_{5,2,3}$ which in turn is directly linked to a line $L$ via complete intersection of a quadric and a cubic. From the basic  relation
$$\dim H^1(\PP^3,\h{I}_C(m))=\dim H^1(\PP^3,\h{I}_D(s+t-4-m))$$
where $C$ and $D$ are directly linked via complete intersection of surfaces of degrees $s$ and $t$, we have 
\begin{equation}\label{h1}h^1(\PP^3,\h{I}_C(4))=h^1(\PP^3,\h{I}_D(0))=h^1(\PP^3,\h{I}_L(2+3-4-0))=0
\end{equation}
and 
$$h^1(\PP^3,\h{I}_D(4))=h^1(\PP^3,\h{I}_L(2+3-4-4))=0.$$
\vni
 Therefore $\pi_D$ is generically surjective with fibers open subsets of $\mathbb{G}(1,15)$.  Since $\dim\h{H}_{5.2.3}$ is known to be irreducible (cf.\cite{E1} or \cite[page 51]{H1}), it follows that  $\Sigma$ is irreducible and $$\dim\Sigma=\dim\mathbb{G}(1,15)+\dim\h{H}_{5.2.3}=28+4\cdot 5=48.$$ On the other hand, since every $C\in\mathcal{I}_4$ lies on exactly $4$ independent quartics by (\ref{h1}), $\pi_C$ is generically surjective with fibers open in $\mathbb{G}(1,3)$. Finally it follows that the locus $\mathcal{I}_4$ is irreducible of dimension
$$\dim\Sigma-\dim\mathbb{G}(1,3)=44=4\cdot11.$$
 
% \medskip
\vni
Since the irreducible loci $\mathcal{I}_2$ and $\mathcal{I}_4$ have the same dimension, one may deduce that general element in $\mathcal{I}_2$ is not  a specialization of a curve in $\mathcal{I}_4$ (and vice versa). Furthermore,  together with the irreducibility of the loci $\mathcal{I}_2$ and $\mathcal{I}_4$, this implies that both are dense in the two components $\overline{\mathcal{I}_2}$
and $\overline{\mathcal{I}_4}$ of $\h{H}_{11,14,3}$. We also remark that the locus $\mathcal{I}_3$\footnote{From the referee, the authors were informed of an alternative way to see that  a curve $C$ in $\mathcal{I}_3$ is a limit of curves not lying on a cubic surface using results by Martin-Deschamps and Perrin \cite[Proposition 2.3, Lemma 2.4 and Proposition 4.1]{MP}. } sits in the boundary of the second component $\overline{\mathcal{I}_4}$ as was explained earlier.
\end{proof}

\begin{thm}\label{main} For $g=11, 13$ and $14$
\begin{enumerate}
\item[\rm{(1)}] 
$\mathcal{H}_{g+1,g,4}$ is irreducible and $\mathcal{H}_{g+1,g,4}=\mathcal{H}^\h{L}_{g+1,g,4}$,
\item[\rm{(2)}]
$\dim\mathcal{H}_{g+1,g,4}=\lambda(g+1,g,4)+\dim\PP GL(5)$,
\item[\rm{(3)}]
and is generically reduced.
\end{enumerate}
\end{thm}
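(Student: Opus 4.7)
We argue by contradiction. By Remark~\ref{linearnormal}(1), $\mathcal{H}^{\mathcal{L}}_{g+1,g,4}$ is irreducible for $g\in\{11,13,14\}$, so suppose there is an additional component $\mathcal{H}\ne\mathcal{H}^{\mathcal{L}}_{g+1,g,4}$ and adopt the notation $\mathcal{G},\mathcal{W},\mathcal{W}^\vee$ of Lemma~\ref{easy}. A general $(p,\mathcal{D})\in\mathcal{G}$ has $|\mathcal{D}|=g^r_{g+1}$ with $r\ge 5$, while Clifford's theorem applied to the special divisor $D$ forces $r\le(g+1)/2$; hence $r\in\{5,6\}$ for $g=11$ and $r\in\{5,6,7\}$ for $g=13,14$. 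The plan is to trichotomize a general element of $\mathcal{W}^\vee\subset\mathcal{W}^{r-2}_{g-3}$ (moving part very ample, birationally very ample, or compounded of an involution) as in Lemma~\ref{easy}, and in each instance pit the upper bounds of Proposition~\ref{wrdbd} against the mandatory lower bound $\dim\mathcal{W}^\vee\ge 4g-5r+2$.

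First, Castelnuovo's bound $\pi(g-3,r-2)<g$ eliminates cases (1) and (2) whenever it holds; a direct check of the nine $(g,r)$ pairs leaves only $(g,r)\in\{(13,5),(14,5)\}$ for those two cases. For case (3), Lemma~\ref{easy}(3) gives $g\le 3r-2$, removing $(g,r)=(14,5)$. The remaining compounded subcases I would handle by fibering $\mathcal{W}^\vee$ over the Hurwitz space of covers $C\xrightarrow{\eta}C'$ of degree $k\ge 2$: when $k=2$, Lemma~\ref{double} forces the pulled-back series on $C'$ to be special, which together with Riemann--Hurwitz and Clifford on $C'$ tightly constrains $g(C')$; when $k\ge 3$, the Castelnuovo--Severi inequality bounds $g(C')$ instead. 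In each subcase, summing the dimensions of the moduli of $(C,C',\eta)$ and of the allowed special series on $C'$, then subtracting for base points, should yield a strict upper bound below $4g-5r+2$.

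The leftover cases are $r=5$, $g\in\{13,14\}$. For the birationally very ample residual, Lemma~\ref{diagram} transfers $\mathcal{W}^\vee$ to a locus $\mathcal{Z}\subset\mathcal{W}^2_{g-5}$ of the same dimension; its complete series is generically a $g^2_{g-5}$ by Brill--Noether (since $\rho(g-5,g,3)<0$). A very ample general element is ruled out by Pl\"ucker's genus formula; a birationally very ample one forces $\dim\mathcal{Z}=3(g-5)+g-9=4g-24<4g-23$ by Proposition~\ref{facts}(2); a compounded one gives $\dim\mathcal{Z}\le 3g-10$ by Proposition~\ref{wrdbd}(c), strictly less than $4g-23$ for $g=14$ but equal to it for $g=13$, so the $g=13$ tetragonal sub-subcase requires a dedicated Hurwitz parameter count guided by Lemma~\ref{double}. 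For the very ample residual, the image is an element of $\mathcal{H}_{g-3,g,3}$: when $g=14$, Theorem~\ref{residual} gives $\dim\mathcal{H}_{11,14,3}=44$ and hence $\dim\mathcal{W}^\vee\le 44-\dim\PP GL(4)=29<33=4\cdot 14-5\cdot 5+2$; when $g=13$, a parallel surface-trichotomy argument for $\mathcal{H}_{10,13,3}$ (curves on smooth quadrics, on smooth cubics, and residual linkage for curves on no cubic) should yield the analogous strict bound. Having eliminated all three cases for each $g$, we obtain $\mathcal{H}_{g+1,g,4}=\mathcal{H}^{\mathcal{L}}_{g+1,g,4}$; assertion (2) is then the known dimension $\lambda(g+1,g,4)+\dim\PP GL(5)=4g+6$ of $\mathcal{H}^{\mathcal{L}}_{g+1,g,4}$ from \cite{KK3}, and (3) reduces by standard deformation theory to the vanishing $H^1(C,N_{C/\PP^4})=0$ at a general smooth curve, which follows from the principal component having its expected dimension. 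The chief obstacle I anticipate is the very ample subcase for $(g,r)=(13,5)$, where no existing theorem supplies the needed sharp upper bound on $\mathcal{H}_{10,13,3}$ and one must replicate the full surface-by-surface argument of Theorem~\ref{residual}; the tight compounded $g=13$ sub-subcases are a secondary source of difficulty where the dimension margin effectively vanishes.
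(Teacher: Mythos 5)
Your skeleton is the same as the paper's: assume an extra component, run the trichotomy of Lemma \ref{easy} on $\h{W}^\vee$, pass to $\h{Z}\subset\h{W}^2_{g-5}$ via Lemma \ref{diagram}, and play Theorem \ref{residual} against the very ample residual for $g=14$ (your one-line disposal of the compounded $\h{Z}$ for $g=14$ via $3g-10=32<33$ is in fact slicker than the paper's six subcases). The genuine gap is your reliance on parameter counts to close the compounded cases: you assert the Hurwitz-space sums ``should yield a strict upper bound below $4g-5r+2$'', but in exactly the critical instances the count lands \emph{on} the lower bound and yields no contradiction. For $g=11$, $r=5$, the compounded $\h{E}=g^3_8\in\h{W}^\vee$ comes from a double cover of an elliptic curve, and $\dim\h{X}_{2,1}+\dim J(E)\le (2g-2)+1=21=4g-23$: equality, so only the geometric content of Lemma \ref{double} (the pulled-back $g^3_4$ is non-special, hence $|\h{D}|$ fails to be very ample) finishes this case. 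Likewise for $g=13$ in the compounded $\h{Z}$ case: the trigonal subcase gives $\dim\h{M}^1_{13,3}+2=27+2=29=4g-23$ unless one proves, as the paper does via the birational very ampleness of $|K_C-2g^1_3-R-S|$, that only finitely many base divisors $\Delta$ occur; and the tetragonal subcase has $\dim\h{M}^1_{13,4}=2g+3=29$ exactly, so the map to $\h{M}^1_{g,4}$ is \emph{dominant} and the contradiction must come from Ballico's theorem \cite{B} that $|K_C-2g^1_4|$ is very ample on a general $4$-gonal curve, forcing $\dim|\h{D}|=4\neq 5$. You flag the tetragonal case as tight but supply no mechanism to resolve it, and you do not notice that the $g=11$ count is equally tight. (Two lesser points: for the very ample residual at $(g,r)=(13,5)$ you need not replicate Theorem \ref{residual} for $\h{H}_{10,13,3}$ --- since $13>\pi_1(10,3)=12$ a very ample $g^3_{10}$ would put $C$ on a quadric, and no type $(a,b)$ with $a+b=10$ has $(a-1)(b-1)=13$; and all cases $r\ge 6$ are vacuous at once because $\pi(g+1,r)<g$ there, so the compounded residuals for $r=6,7$ that you defer to a Hurwitz argument never arise.)

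The second gap is part (3). You reduce generic reducedness to $H^1(C,N_{C/\PP^4})=0$ and claim this ``follows from the principal component having its expected dimension.'' For $g\le 14$ one has $\rho(g+1,g,4)<0$, so there is no principal component (Remark \ref{principal}(3)); and in any case a component having the expected dimension does not imply it is generically smooth --- it could be everywhere non-reduced with reduced support of the right dimension, which is precisely what must be excluded. The paper instead deduces generic reducedness from the birational identification of $\h{G}_\h{L}$ with the locus $\h{G}'\subset\h{G}^2_{g-3}$ lying over the Severi variety, together with the Arbarello--Cornalba bound that the singular locus of $\h{G}'$ has dimension at most $g-8<4g-18$; some argument of this kind is needed to make your part (3) non-circular.
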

\begin{proof}
We retain all the notations used in Lemma \ref{easy}; $\mathcal{H}$ is an extra component of $\mathcal{H}_{g+1,g,4}$ other than $\mathcal{H}^\mathcal{L}_{g+1,g,4}$,  $\mathcal{G}\subset\mathcal{G}^r_d$ is the component corresponding to $\mathcal{H}$,
$\h{W}\subset \h{W}^r_{g+1}$ is the component corresponding to $\h{G}$ where 
$r=\dim|\h{D}|$ for a general $(p,\h{D})\in\h{G}$ and so on.

\vni
{\textbf[$g=11$]} Note that $r=5$ since there does not exist a special $g^6_{g+1}=g^6_{12}$ on a non-hyperelliptic curve by Clifford's theorem; a hyperelliptic curve cannot be embedded in $\PP^r$, $r\ge 2$ as a curve of degree $g+1$ with a special hyperplane series.
We also note that a general element of $\h{W}^\vee$ is neither very ample nor birationally very ample; if so, the base-point-free part of a general $\mathcal{E}=g^3_8\in \mathcal{W}^\vee$ induces a birational morphism (or an embedding) into $\mathbb{P}^3$ and the genus of $C$ is at most $\pi(8,3)=9$ by the Castelnuovo genus bound. Therefore a general element of $\mathcal{W}^\vee$ is compounded.  We further note that 
$\mathcal{E}$ is base-point-free, otherwise $\mathcal{E}$ has at least degree two base locus in which case the Clifford's theorem applies. Therefore $\mathcal{E}$ induces degree two morphism $C\stackrel{\eta}{\rightarrow}E$ onto an elliptic curve $E\subset \mathbb{P}^3$ and $\h{E}=\eta^*(\h{F})$ for some non-special $\h{F}=g^3_4$ on the elliptic curve $E$, which is impossible by Lemma \ref{double}. Hence  there is no extra component of $\mathcal{H}_{g+1,g,4}$ other than  $\mathcal{H}^\mathcal{L}_{g+1,g,4}$, which is already irreducible of expected dimension.

\vni
One easily sees that $\mathcal{H}_{12,11,4}=\mathcal{H}^\mathcal{L}_{12,11,4}$ is non-empty. A non-singular model of a plane curve of degree $8$ with one ordinary $4$-fold points and $4$ nodes embeds into a Del Pezzo surface in $\mathbb{P}^4$ by the linear system of cubics through these $5$ points  as a smooth curve of degree $d=3\cdot 8-1\cdot  4-4\cdot 2=12$ and genus $g=21-6-4\cdot 1=11$; cf. \cite[Theorem 1.1]{rathman}.

\vskip 4pt
\noindent
{\textbf[$g=13$]} For $g=13$, we also have $r=\dim|\h{D}|=5$ for a general $(p,\h{D})\in\h{G}$ since $\pi (14,r)\le 11$ for $r\ge 6$. 

\vskip 4pt
\noindent
(a)
Assume that the morphism induced by $\h{E}=|K_C-\mathcal{D}|=g^3_{10}$ is compounded. If $\h{E}$ is base-point-fee, then  
$\h{E}$ induces a double covering $C \stackrel{\eta}{\rightarrow} E\subset\PP^3$ onto a curve $E$ of genus at most two (by the Castelnuovo genus bound applied to $E$) and $\h{E}=g^3_{10}=\eta^*(\mathcal{F})$ for some $\mathcal{F}=g^3_5$, which is  complete, non-special and base-point-free linear series on $E$.  However this is impossible by 
Lemma \ref{double}.
\vni
Suppose that a general element $\h{E}\in\h{W}^\vee$ has a non-empty base locus $\Delta$.  

\vni
If $\deg\Delta=1$, then 
the moving part of $\h{E}=g^3_{10}$ is a $g^3_9$ inducing a 3-sheeted map $C\stackrel{\zeta}{\rightarrow} D$ onto a rational normal curve $D\subset \mathbb{P}^3$, i.e. $C$ is trigonal.
Let $g^1_3$ be the unique  trigonal pencil (by Castelnuovo-Severi inequality) on $C$. 
Note that  $$\h{E}=|K_C-\mathcal{D}|=|3g^1_3|+\Delta$$
and take $\Gamma\in C_2$ such that $\Delta+\Gamma\in g^1_3$.
We then have 
$$|\h{E}+\Gamma|=|K_C-\h{D}+\Gamma|=|4g^1_3|$$
and it follows that $|\h{D}-\Gamma|=|K_C-4g^1_3|=g^4_{12}$ and therefore 
$\h{D}$ is not very ample, a contradiction.

\vni
In case $\deg\Delta=2$, $|\h{E}-\Delta|=g^3_8$ induces a double covering onto an elliptic curve, which is impossible by Lemma \ref{double}.
The case $\deg\Delta\ge 3$ never occurs by obvious reasons; Clifford's theorem etc..
Therefore we conclude that a general element of $\mathcal{W}^{\vee}$ cannot be compounded.

\vni
(b) 
Suppose that the moving part of a  general element of $\mathcal{W}^{\vee}\subset\mathcal{W}^{3}_{g-3}$ is very ample and take  a general $|K_C-\mathcal{D}|=g^3_{10}\in\h{W}^\vee$. Note that the the genus $g=13$ is larger than the second Castelnuovo genus bound $\pi_1(10,3)$;
$$\pi_1(10,3)=\frac{(g-4)(g-5)}{6}=12<g=13<\pi(10,3)=16$$ and hence by basic Castelnuvo theory the curve embedded by a very ample $g^3_{10}$ must lie on a quadric surface in $\mathbb{P}^3$; cf. \cite[Corollary 3.14, page 97]{H1}.  However 
there is no integer pair $(a,b)$  satisfying $a+b=10$ and $(a-1)(b-1)=13=g$.

\vni
(c)
Therefore the moving part of a  general element of $\mathcal{W}^{\vee}\subset\mathcal{W}^{3}_{g-3}$ is birationally very ample and let $b$ be the degree of the base locus $B$ of a general element of $\mathcal{W}^{\vee}$. 
 By Lemma \ref{easy}(2), we have
$b=0$ and 
\begin{equation}\label{equal2}4g-23\le \dim\mathcal{W}=\dim\mathcal{W}^{\vee}\le4g-22.
\end{equation}

\vni
By Lemma \ref{diagram}, we have a irreducible closed locus $\mathcal{Z}\subset\mathcal{W}^2_{g-5}$ such that $\dim\h{Z}=\dim\h{W}^\vee=\dim\h{W}$. Note that the base-point-free part of general element of $\mathcal{Z}$  is not very ample; $g=13$ is not a genus of a smooth plane curve.
If the base-point-free part of general element of $\mathcal{Z}$  is birationally very ample, then by Proposition \ref{wrdbd}(b) (applied to the locus $\h{Z}$),  we have
$$\dim\h{Z}=\dim\mathcal{W}\le 3(g-5)+g-1-8=4g-24, $$ which is not compatible with the above (\ref{equal2}).
 Therefore the only remaining possibility is that a general element of $\mathcal{Z}\subset\mathcal{W}^2_{g-5}$ is compounded and let $\Delta$ be the base locus of a general element $\h{F}=g^2_8\in\h{Z}$ with $\delta=\deg\Delta$. One of the following may occur; note that $\delta\le 2$ and $\delta\neq 1$ by Clifford's theorem etc.

\noindent
\begin{enumerate}
\item[\rm{(1)}] $C$ is a double covering of a curve $E$ of genus $2$ with a two sheeted map $C\stackrel{\zeta} \rightarrow E$ and $\mathcal{F}=\zeta^*({g^2_4})$; $\delta=0$.

\noindent
\item[\rm{(2)}] $C$ is a double covering of a smooth plane quartic $F$ with a two sheeted map $C\stackrel{\eta}\rightarrow F$ and $\mathcal{F}=\eta^*(|K_F|)=\eta^*(g^2_4)$; in the case $\h{F}$ is
a pull-back of the special $|K_F|$ on the quartic $F$ and $\delta=0$.

\noindent
\item[\rm{(3)}] $C$ is bi-elliptic with a bi-elliptic covering $C\stackrel{\phi} \rightarrow E$ and $\mathcal{F}=\phi^*(g^2_3)+\Delta$; $\delta =2$. 
\noindent
\item[\rm{(4)}] 
$C$ is trigonal and $\mathcal{F}=2g^1_3+\Delta$; $\delta=2$.
\noindent
\item[\rm{(5)}] $C$ a $4$-gonal curve with $\mathcal{F}=2g^1_4$; $\delta=0$. 
\end{enumerate}
\vskip 4pt
\noindent
We now choose a general $(\h{F},p)\in\h{Z}\subset\h{W}^2_{g-5}$ and set $$|\h{F}+\Gamma|\in\h{W}^\vee\subset\h{W}^3_{g-3}$$ where $\Gamma =t+s\in C_2$ is obtained from a singularity of the image curve of the morphism  $\xi^{-1}(p)=C\rightarrow\PP^3$ induced by 
a birationally very ample and base-point-free $g^3_{10}=|\h{F}+\Gamma|=\h{E}=|K_C-\h{D}|\in\h{W}^\vee$. 

\vni
(1) and (3): One may argue that the residual series $|\mathcal{D}|=|K_C-\mathcal{F}-\Gamma|$ is not very ample, virtually in the same manner as in the proof of Lemma \ref{double} as follows. If $C\stackrel{\zeta}\rightarrow E$ is a double covering onto a curve $E$ of genus $2$, then $$|\h{F}|=\zeta^*(g^2_4)=|\h{E}-t-s|=|K_C-\h{D}-t-s|$$ 
and hence 
$$|\h{D}|=|K_C-\zeta^*(g^2_4)-t-s|.$$ We take $r'+s'\in C_2$ which is the conjugate divisor of the divisor $t+s\in C_2$ with respect to the double covering $\zeta$.
Then we have 
$$|\h{D}-t'-s'|=|K_C-\zeta^*(g^2_4)-t-s-t'-s'|=|K_C-\zeta^*(g^4_6)|=g^4_{12}$$
hence $|\h{D}|$ is not very ample. The case (3) is almost identical to (1) which we omit.

\vni
For the case (2), a calculation similar to the above or applying Proposition \ref{wrdbd}(c) to the locus $\h{Z}$ fails to work. Instead we argue as follows. Recall that under our current circumstance, the double covering $\eta$ onto a smooth plane quartic is induced by a (general) element of $\h{Z}\subset\h{W}^2_{g-5}=\h{W}^2_8$, which is a subseries of a birationally very ample base-point-free $\h{E}\in\h{W}^\vee\subset\h{W}^3_{10}$. To be precise, consider the following diagram which is adopted to the current situation which appeared in the proof of Lemma \ref{diagram};
\[
\begin{array}{ccc}
\mathcal{W}^{2}_{8}\underset{\mathcal{M}}{\times}\mathcal{W}_2\supset\Sigma & \stackrel{q}\dashrightarrow &
\mathcal{W}^\vee\subset\mathcal{W}^{3}_{10}\subset\mathcal{W}^{2}_{10}\\
\\
\quad\quad\quad\quad\quad\quad\dashdownarrow\vcenter{\rlap{$\scriptstyle{{\pi}}\,$}} & 
\\ 
\\
\quad\quad\quad\mathcal{W}^{2}_{8}\supset\mathcal{Z} & \quad\stackrel{\zeta}{\dashrightarrow}\quad&\mathcal{X}_{2,3} \subset\mathcal{M}_g
\end{array}
\]
 \vni
 where $\mathcal{X}_{n,\gamma}$ denotes the locus in $\mathcal{M}_g$ corresponding to curves
which are n-fold coverings of smooth curves of genus $\gamma$. By de Franchis theorem (cf. \cite{Sommese}), the fiber in $\mathcal{Z}$ of the rational map $\zeta$ over a general class $[C]\in\zeta (\h{Z})\subset\h{X}_{2,3}$ is finite. As we saw in the proof of Lemmma \ref{diagram}, $\pi$ is a generically finite map and hence over $(\h{E}',p)\in\h{Z}$ we have  finitely many $\h{O}_C(R+S)$'s in the second factor of the locus $\Sigma$ so that $\h{E}'\otimes\h{O}_C(R+S)\in\h{W}^\vee$.
Therefore upon choosing a general $[C]\in\zeta(\h{Z})\subset\h{X}_{2,3}$, we arrive at an element $\mathcal{E}\in\h{W}^\vee$. The ambiguities in choosing an element among the finite fiber of $\zeta$ as well as the fiber of $\pi$ do not affect the following dimension count. In other words, we arrive at finitely many 
$\h{E}\in\h{W}^\vee$ and all the $\h{E}$'s obtained in this way lie on $W^3_{10}(C)$. 
By the well-known Riemann's moduli count \cite[Satz 1]{Lange} $$\dim\mathcal{X}_{n,\gamma}\le 2g+(2n-3)(1-\gamma)-2,$$ one has
\begin{eqnarray*}
4g-23\le\dim\h{W}^\vee=\dim\mathcal{W}\le\dim\mathcal{X}_{2,3} 
\le 2g-4,
\end{eqnarray*}
a contradiction. 

\vni
(4) Since a trigonal curve  $C$ of genus $g\ge 5$ has a unique trigonal pencil $g^1_3$, we have the following well defined rational map induced from the construction of the sublocus $\h{Z}\subset\h{W}^2_8$.

\begin{align*}
& \h{Z}\quad\stackrel{\psi}{\dashrightarrow}\quad\h{W}^1_{g,3}\underset{\mathcal{M}}{\times}\h{W}_2\quad\stackrel{\pi_2}{\dashrightarrow}\quad\h{W}^1_3\quad\stackrel{\kappa}{\dashrightarrow}\quad\h{M}^1_{g,3}\\
&\h{F} \mapsto (g^1_3, |\h{F}-2g^1_3|=\Delta)
\end{align*}
where $\pi_2$ is the second projection and $\kappa$ is a natural birational map.
We now claim that the composition $\tau :\h{Z} \dashrightarrow \h{M}^1_{g,3}$ of the above maps  is a generically finite map.

\vni
Take the residual series of $|\h{D}|\in\h{W}$;
$$|K_C-\h{D}|=\mathcal{E}=g^3_{10}=|\h{F}+\Gamma|=|2g^1_3+\Delta+\Gamma|=|2g^1_3+\Delta+R+S|,$$ so that
$$|\h{D}|=|K_C-2g^1_3-(R+S)-\Delta|.$$
Note that $\dim|2g^1_3+U+V|=\dim|2g^1_3|=2$ for any $U+V\in C_2$ and hence $$\dim|K_C-2g^1_3-(R+S)|=\dim|K_C-2g^1_3|-2=6.$$ 
We also note that a very ample $\h{D}$ is a subseries of $|K_C-2g^1_3-(R+S)|$ and hence the series $|K_C-2g^1_3-(R+S)|$ is birationally very ample, which is also base-point-free.
Therefore it follows that there exists only finitely many choice of $\Delta$'s subject to the condition 
$$5=\dim|\h{D}|=\dim|K_C-2g^1_3-(R+S)-\Delta|=\dim|K_C-2g^1_3-(R+S)|-1.$$
and  hence $\tau$ is generically finite. 
By this  claim follows that 
$$4g-23\le\dim\h{Z}\le\dim\h{M}^1_{g,3}=2g+1,$$
which is a contradiction. 

\vni
(5) In this case we have $\h{F}=2g^1_4$ for a general $\h{F}\in\h{Z}$, which is complete. Note that on a  $4$-gonal curve $C$ of genus $g\ge10$ which is not bi-elliptic, there exists a unique $g^1_4$ and hence  we have a natural generically injective rational map 
$$\h{Z}\stackrel{\pi}{\dashrightarrow} \h{M}^1_{g,4}.$$
and it follows that  
$$4g-23\le\dim\h{Z}\le \dim\h{M}^1_{g,4}=2g+3$$ and hence the rational map $\pi$ is dominant. Recall that on a general $4$-gonal curve $C$, 
$|K_C-2g^1_4|$ is very ample; cf. \cite{B}. Hence it follows that 
\begin{eqnarray*}\dim|\h{D}|&=&\dim|K_C-\h{E}|=\dim|K_C-\mathcal{F}-\Gamma|\\&=&\dim|K_C-2g^1_4-R-S|=\dim|K_C-2g^1_4|-2=6-2=4
\end{eqnarray*} which is a contradiction.

\vni
{\textbf[$g=14$]}
As in the cases of lower $g$, we have $r=5$, otherwise the Castelnuovo bound for a very ample $g^6_{15}=g^6_{g+1}$ is $\pi(15,6)=13$ which is less than $g=14$.
\vni
(a) If a general element of $\mathcal{W}^{\vee}$ is compounded, then by Lemma \ref{easy} (3), $g\le 3r-2=13$, which is impossible.

\vni
(b) If a general element of $\h{W}^\vee\subset\h{W}^3_{11}$ is very ample, then $\dim\h{W}^\vee=4g-23$ by Lemma \ref{easy} (1). Indeed $4g-23$ is the maximal possible dimension of any $\h{V}\subset\h{W}^3_{11}$ consisting of very ample linear series by Proposition \ref{wrdbd} (a). 
Therefore it follows that over the locus $\h{W}^\vee\subset\h{W}^3_{11}$ generically consisting of very ample complete linear series, there is a component $\tilde{\h{H}}$ of the Hilbert scheme $\h{H}_{11,14,3}=\h{H}_{g-3,g,3}$ such that 
$$\dim\tilde{\h{H}}=\dim\h{W}^\vee +\dim\PP GL(4)=4g-23+15=48,$$
which is impossible 
by 
%the irreducibility of  $\h{H}_{11,14,3}$ which has dimension $44=4\cdot 11$ 
Theorem \ref{residual}.

\vni
(c) Therefore a general element of $\h{W}^\vee$ is birationally very ample which base-point-free by Lemma \ref{easy} (2). By Lemma \ref{diagram}, we take the locus $\h{Z}\subset\h{W}^2_{g-5}$ such that
$$4g-23\le\dim\h{Z}=\dim\h{W}^\vee=\dim\h{W}\le 4g-22.$$

\vni
(i) Base-point-free part of a general element of $\h{Z}$ is not very ample since $g=14$ is not a genus of a smooth plane curve.

\vni
(ii) Base-point-free part of a general element of $\h{Z}$ birationally very ample by Proposition \ref{wrdbd} (2);  if so  $4g-23\le \dim\h{Z}\le 3(g-5)+g-1-4\cdot 2=4g-24$, a contradiction.

\vni
(iii) Therefore the base-point-free part of a general element of $\h{Z}$ is compounded. We list up all the possibilities according to the degree $\delta$ of the base locus $\Delta$ of a general $\h{F}\in\h{Z}$.

\begin{enumerate}
\item[\rm{(1)}] $\delta=0$; A general $(\h{F}, p)\in\h{Z}$ induces a triple covering $C=\xi^{-1}(p)\stackrel{\tau}{\rightarrow}E$ onto an elliptic curve. 

\item[\rm{(2)}] $\delta=1$; A general $\h{F}\in\h{Z}$ induces a double covering $C\stackrel{\tau}{\rightarrow}E$
onto a curve of genus $2$.

\item[\rm{(3)}] $\delta=1$; A general $\h{F}\in\h{Z}$ induces a double covering $C\stackrel{\tau}{\rightarrow}E$
onto a non-hyperelliptic curve of genus $3$.

\item[\rm{(4)}] $\delta=1$; A general $\h{F}\in\h{Z}$ induces a $4$-sheeted covering $C\stackrel{\tau}{\rightarrow}E$
onto a rational curve.

\item[\rm{(5)}] $\delta=3$; A general $\h{F}\in\h{Z}$ induces a double covering $C\stackrel{\tau}{\rightarrow}E$
onto an elliptic curve.

\item[\rm{(6)}] $\delta=3$; A general $\h{F}\in\h{Z}$ induces a triple covering $C\stackrel{\tau}{\rightarrow}E$
onto a rational curve.
\end{enumerate}

\vni
Instead of  carrying out a precise dimension estimate in all the cases above as we did in the case $g=13$, we will make straight forward dimension estimate this time. Our estimate is rather crude but this is sufficient for our purpose.
By assuming that a general element of $\h{Z}$ is compounded, we have a sequence of rational maps defined as follows.

\begin{align*}
&{\hskip -8pt}{\h{W}^2_{g-5}}&&{\hskip -12pt\h{W}^2_{g-5-\delta}\underset{\mathcal{M}}{\times} \h{W}_\delta}\\
&\hskip -2pt\downinclusion&&\hskip 8pt\downinclusion\\
&\h{Z} \quad\quad\quad\stackrel{\psi}{\dashrightarrow }&&\quad\Lambda\quad\quad\stackrel{\varphi}{\dashrightarrow} \quad\h{K}=\h{X}_{n,\gamma}\quad\hookrightarrow\quad\h{M}_g\\
&& \vequal\hskip -17pt
\\&&\{(\h{F}',\Delta)|\h{F}'+\Delta=\h{F}\in\h{Z}\}\hskip -70pt
\\&\downelement&&\hskip 8pt\downelement
\\&\h{F} \hskip 30pt\longmapsto &&(\h{F}',\Delta)
\end{align*}

\vni
$\psi$ is the map sending $\h{F}$ to the pair $(\h{F}', \Delta))$ where $\h{F}'$ is the moving part of $\h{F}$ and $\Delta$ is the base locus.
The second dotted arrow $\varphi$ is the map assiging the base-point-free part $\h{F}'$ to its isomorphism class in an appropriate $\h{X}_{n,\gamma}$ determined by the compounded $\h{F}'$. It is clear that the first arrow $\psi$ is well defined and generically injective. We stress that $\varphi$ is also well defined by the Castelnuovo-Severi inequality so that there is only one choice of multiple covering upon choosing a general $\h{F}\in\h{Z}$. However the fiber of $\varphi$ over  a point in the image may have dimension more than the degrees of freedom choosing the  base locus.

\vni For example, 
in the case (1), $\Delta=\emptyset$ and there is no second factor
and $\h{K}=\h{X}_{3,1}\subset\h{M}_g$. Given a triple cover of an elliptic curve $C\stackrel{\tau}{\rightarrow}E$ represented by a point  $p\in\h{X}_{3,1}$, the fiber of $\varphi$ over $p$  is inside $\tau^*(W^2_3(E))=\tau^*(J(E))$ which is one dimensional. 
Hence it follows that $$4g-23\le\dim\h{Z}\le\dim\tau^*(J(E))+\dim\h{X}_{3,1}\le 1+ 2g-2$$ leading to an absurdity. For the remaining possible cases, we may come up with similar numerical absurdities as follows.
\vni
\begin{enumerate}

\item[\rm{(2)}] $\delta=1$, a general $\h{F}\in\h{Z}$ induces a double covering $C\stackrel{\tau}{\rightarrow}E$ onto a curve $E$ of genus $2$ and $\h{F}=\tau^*(g^2_4)+\Delta$.
$$4g-23\le\dim\h{Z}\le\dim\tau^*(J(E))+\dim\h{X}_{2,2}+\delta\le 2+ 2g-3+ 1=2g$$ 

\vni
\item[\rm{(3)}] 
$\delta=1$; a general $\h{F}\in\h{Z}$ induces a double covering $C\stackrel{\tau}{\rightarrow}E$ onto a non-hyperelliptic curve of genus $3$ and and $\h{F}=\tau^*(|K_E|)$.
$$4g-23\le\dim\h{Z}\le\dim\tau^*(|K_E|))+\dim\h{X}_{2,3}+\delta\le  2g-4+\delta=2g-3$$ 

\vni
\item[\rm{(4)}] $\delta=1$; a general $\h{F}\in\h{Z}$ induces a $4$-sheeted covering $C\stackrel{\tau}{\rightarrow}E$
onto a rational curve.
$$4g-23\le\dim\h{Z}\le\dim\h{M}^1_{g,4}+\delta \le 2g+3+1=2g+4$$

\vni
\item[\rm{(5)}] $\delta=3$; a general $\h{F}\in\h{Z}$ induces a double covering $C\stackrel{\tau}{\rightarrow}E$
onto an elliptic curve.
$$4g-23\le\dim\h{Z}\le\dim\tau^*(J(E))+\dim\h{X}_{2,1}+\delta\le 2g+2$$  

\vni
\item[\rm{(6)}] $\delta=3$; A general $\h{F}\in\h{Z}$ induces a triple covering $C\stackrel{\tau}{\rightarrow}E$
onto a rational curve.
$$4g-23\le\dim\h{Z}\le\dim\h{M}^1_{g,3}+\delta\le 2g+1+3=2g+4$$
\end{enumerate}
We could have used a variation of Lemma \ref{double} in the cases (2) and (5) to deduce that $\h{D}$ from which $\h{F}$ is induced is not very ample,  as we did in the case for $g=13$. Also note that the above estimate is rather rough, i.e. the possible choice of the base locus is usually finite under several other conditions of ours. 

\vni
For all the three Hilbert schemes we treated, we showed $\h{H}_{g+1,g,4}=\h{H}^\h{L}_{g+1,g,4}$ and hence $\h{H}_{g+1,g,4}$ is irreducible since $\h{H}^\h{L}_{g+1,g,4}$ is. On the other hand, in \cite[Theorem 2.1]{KK3}, one proves the irreducibility of $\h{H}^\h{L}_{g+1,g,4}$ by showing that 
the locus $\h{G}_\h{L}$ corresponding to a component of $\h{H}^\h{L}_{g+1,g,4}$ is birational to the locus $\h{G}'\subset\mathcal{G}^2_{g-3}$ corresponding to the Severi variety  $\Sigma_{g-3,g}$, which is irreducible and has the expected dimension. Therefore $\h{H}_{g+1,g,4}=\h{H}^\h{L}_{g+1,g,4}$ has the expected dimension.The generically reducedness of $\h{H}_{g+1,g,4}=\mathcal{H}^\mathcal{L}_{g+1,g,4}$ follows from the fact that the dimension of the singular locus of $\mathcal{G}'\subset \mathcal{G}^{2}_{g-3}$ does not exeed $g-8<\lambda(g+1,g,4)=\lambda(g-3,g,2)=4g-18$; cf. \cite[Proposition (2.9)]{AC2}. Thus $\mathcal{G}'$ is generically reduced and since $\mathcal{G}_\mathcal{L}$ is birational to $\mathcal{G}'$ it follows that $\h{H}_{g+1,g,4}=\mathcal{H}_{g+1,g,4}^\mathcal{L}$ is also generically reduced. 
\end{proof}

\begin{rmk} (1) It should be remarked that for $g=15$, one can show the irreducibility of $\h{H}_{g+1,g,4}$ by a similar method we used for the case $g=14$. 
As an intermediate step (such as Theorem 2.6), one may show that  the Hilbert scheme $\h{H}_{12,15,3}=\h{H}_{g-3,g,3}$ is of the minimal possible dimension $4(g-3)$ (and is irreducible in this case) by using the fact that $\h{H}_{8,5,3}$ is irreducible of dimension $4\cdot 8$ whose general element is directly linked to the one in $\h{H}_{12,15,3}$ via complete intersection of a quartic and a quintic; recall that the irreducibility of $\h{H}_{8,5,3}$ is known by \cite{E1} or \cite{KK}. It is worthwhile to  remark that there is no component of  $\h{H}_{12,15,3}$ whose general element corresponds to a curve $C$ on a quadric or a cubic surface. For example, one can eliminate the possibility for a general element of (a component of) $\h{H}_{12,15,3}$ lying on a smooth cubic as follows. Note that smooth space curves of degree $d$ and genus $g$ on a smooth cubic surface form a finite union of locally closed irreducible family in $\h{H}_{d,g,3}$ of dimension $d+g+18$ if $d\ge 10$ by \cite[Proposition B.1]{Gruson}.  Since $d+g+18< 4d$ for $(d,g)=(12,15)$, it follows that this family does not constitute a component. In fact, we could have used \cite[Proposition B.1]{Gruson} directly in the course of the proof of Theorem \ref{residual} instead of going through thorough computation. We leave the other details for interested readers. 
\vni
(2) Note that $\rho (g+1,g,4)=0$ for $g=15$ and one may expect that the same tactics using linkage theory as above or as the case $g=14$ may work for $g\ge 16$ in general. However, when the genus $g$ and the degree $d$ of the family of curves in question is large, the curve often needs to lie on surfaces of rather high degree. Usually such curves are not necessarily linked to another curve which is easier to describe or we know much of.
\end{rmk}

\bibliographystyle{spmpsci} 

\end{document}